\def\showauthornotes{0}
\def\showkeys{0}
\def\showdraftbox{0}
\newcommand{\Authornote}[2]{{\sf\small\color{red}{[#1: #2]}}}
\newcommand{\Authorcomment}[2]{{\sf \small\color{gray}{[#1: #2]}}}
\newcommand{\Authorfnote}[2]{\footnote{\color{red}{#1: #2}}}
\newcommand{\Authornote}[2]{}
\newcommand{\Authorcomment}[2]{}
\newcommand{\Authorfnote}[2]{}
\newcommand{\draftbox}{\begin{center}
  \fbox{%
    \begin{minipage}{2in}%
      \begin{center}%
        \begin{Large}%
          \textsc{Working Draft}%
        \end{Large}\\
        Please do not distribute%
      \end{center}%
    \end{minipage}%
  }%
\end{center}
\vspace{0.2cm}}
\newcommand{\draftbox}{}
\newtheorem{theorem}{Theorem}[section]
\newtheorem{definition}[theorem]{Definition}
\newtheorem{lemma}[theorem]{Lemma}
\newtheorem{remark}[theorem]{Remark}
\newtheorem{corollary}[theorem]{Corollary}
\def\FullBox{\hbox{\vrule width 6pt height 6pt depth 0pt}}
\def\qed{\ifmmode\qquad\FullBox\else{\unskip\nobreak\hfil
\penalty50\hskip1em\null\nobreak\hfil\FullBox
\parfillskip=0pt\finalhyphendemerits=0\endgraf}\fi}
\def\qedsketch{\ifmmode\Box\else{\unskip\nobreak\hfil
\penalty50\hskip1em\null\nobreak\hfil$\Box$
\parfillskip=0pt\finalhyphendemerits=0\endgraf}\fi}
\newenvironment{proof}{\begin{trivlist} \item {\bf Proof:~~}}
   {\qed\end{trivlist}}
\def\to{\rightarrow}
\def\eps{\varepsilon}
\def\epsilon{\varepsilon}
\def\phi{\varphi}
\newcommand{\defeq}{\stackrel{\mathrm{def}}=}
\renewcommand{\bar}{\overline} 
\newcommand{\ie}{\emph{i.e.},\xspace}
\newcommand{\mcom}{\,,}
\newcommand{\R}{{\mathbb R}}
\newcommand{\E}{{\mathbb E}}
\newcommand{\N}{{\mathbb{N}}}
\newcommand{\Z}{{\mathbb Z}}
\newcommand{\F}{{\mathbb F}}
\newcommand{\B}{\{0,1\}\xspace}
\let\nfrac=\nicefrac
\newcommand{\abs}[1]{\ensuremath{\left\lvert #1 \right\rvert}}
\newcommand{\Esymb}{\mathbb{E}}
\newcommand{\Psymb}{\mathbb{P}}
\DeclareMathOperator*{\ExpOp}{\Esymb}
\DeclareMathOperator*{\ProbOp}{\Psymb}
\renewcommand{\Pr}{\ProbOp}
\newcommand{\Prob}[2]{\Pr_{{#1}}\left[{#2}\right]}
\newcommand{\Ex}[2]{\ExpOp_{{#1}}\left[{#2}\right]}
\renewcommand{\E}{\ExpOp}
\newfont{\inhead}{eufm10 scaled\magstep1}
\newcommand{\poly}{{\mathrm{poly}}}
\newcommand{\Szemeredi}{Szemer\'edi\xspace}
\definecolor{darkred}{rgb}{0.5,0,0}
\definecolor{darkgreen}{rgb}{0,0.5,0}
\definecolor{darkblue}{rgb}{0,0,0.5}	\usepackage[pdfstartview=FitH,pdfpagemode=None,colorlinks,linkcolor=darkred,filecolor=blue,citecolor=darkred,urlcolor=darkred,pagebackref]{hyperref}
\newcommand{\ent}{\mathsf{Ent}}
\renewcommand{\eps}{\varepsilon}
\renewcommand{\epsilon}{\eps}
\newcommand{\tower}[1]{\mathsf{Tower}({#1})}
\title{%
 An Arithmetic Analogue of Fox's Triangle Removal Argument
}%
\author{%
  Pooya Hatami\thanks{Institute for Advanced Study, Princeton, NJ, USA. This work was done when the author was a student at University of Chicago. email: \texttt{pooyahat@math.ias.edu}. Research supported by the NSF grant No. CCF-1412958.} \and Sushant Sachdeva\thanks{Department of Computer
    Science, Yale University, USA. Part of this work was done when the
    author was a student at Princeton University, and was visiting TTI Chicago. email:
    \texttt{sachdeva@cs.yale.edu}} \and Madhur Tulsiani\thanks{%
    Toyota Technological Institute at Chicago. email: \texttt{madhurt@ttic.edu}. Research supported by NSF
    Career Award CCF-1254044. }%
}
\date{\today}
\begin{document}

\sloppy

\maketitle

\draftbox
\setcounter{page}{1}
\thispagestyle{empty}
\begin{abstract}
We give an arithmetic version of the recent proof of the triangle removal lemma by Fox \cite{Fox11}, for the group $\F_2^n$.

  A triangle in $\F_2^n$ is a triple $(x,y,z)$ such that $x+y+z = 0$. The triangle removal lemma for
  $\F_2^n$ states that for every $\eps  > 0$ there is a $\delta > 0$, such that if a subset $A$ of
  $\F_2^n$  requires the removal of at least $\eps \cdot 2^n$ elements to make it triangle-free,
  then it must contain at least $\delta \cdot 2^{2n}$ triangles. This problem was first studied by
  Green \cite{Green05} who proved a lower bound on $\delta$ using an arithmetic regularity
  lemma. Regularity based lower bounds for triangle removal in graphs were recently improved by Fox
  and we give a direct proof of an analogous improvement for triangle removal in $\F_2^n$.

  The improved lower bound was already known to follow (for
  triangle-removal in all groups), using Fox's removal lemma for
  directed cycles and a reduction by Kr\'{a}l, Serra and Vena
  \cite{KralSV09} (see \cite{Fox11,ConlonF12}). The purpose of this
  note is to provide a direct Fourier-analytic proof for the group
  $\F_2^n.$
\end{abstract}

\section{Introduction}
The triangle removal lemma for graphs states that for every $\eps >
0$, there exists a $\delta > 0$ such that every graph on $n$
vertices with at most $\delta n^3$ triangles can be made triangle-free by deleting less than $\eps n^2$ edges. Contrapositively, this
means that if a graph is at least $\eps$-far from being triangle-free,
\ie one needs to delete more than $\eps n^2$ edges to make it triangle
free, then it must have at least $\delta n^3$ triangles.

The lemma was
originally proved by Ruzsa and \Szemeredi \cite{RuzsaS76} with a bound
of $\delta \geq 1/\tower{\poly(1/\eps)}$, where $\tower{i}$ denotes a
tower of twos of height $i$, \ie $\tower{i} = 2^{\tower{i-1}}$ for $i
\geq 1,$ and $\tower{0}=1$. It took over three decades for this bound to be improved to
$\delta \geq 1/\tower{O(\log(1/\eps))}$ in a remarkable paper of Fox
\cite{Fox11}. 

The above lemma has a direct application to property testing as it states that for
a graph which is $\eps$-far from being triangle-free, a random triple
of vertices is guaranteed to form a triangle with probability at least
$\delta$. Thus if we test $\Omega(1/\delta)$ random triples of
vertices, we will find a triangle with constant probability. This test
can then distinguish such a graph from one which is triangle-free,
since, for a triangle-free graph, the probability of the test
succeeding is 0. The triangle removal lemma and its generalizations to
removal of more general graphs (instead of a triangle) have several
interesting applications in mathematics, and we refer the reader to
the survey \cite{ConlonF12} for a detailed discussion.

An arithmetic version of the triangle removal problem was considered
by Green \cite{Green05}. Let $G$ be an Abelian group with $|G| = N$
and let $A \subseteq G$ be an arbitrary subset. We call a triple
$(x,y,z) \in A^3$ a \emph{triangle} if $x+y+z = 0$. Similar to the
graph case, we say that $A$ is $\eps$-far from being triangle-free if
one needs to remove at least $\eps N$ elements from $A$ to make it
triangle-free. Green proved an arithmetic analogue of \Szemeredi's
regularity lemma \cite{S75}, used in the proof of Ruzsa and
\Szemeredi, and proved a triangle removal lemma for Abelian groups.
\begin{theorem}[Green \cite{Green05}]
For all $\eps \in (0,1]$, there exists a $\delta \geq
1/\tower{\poly(1/\eps)}$ such that for an Abelian group $G$ with $|G|
= N$, if a subset $A \subseteq G$ is $\eps$-far from being
triangle-free, then $A$ must contain at least $\delta N^2$ triangles.
\end{theorem}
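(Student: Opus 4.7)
The plan is to follow the classical Ruzsa--\Szemeredi strategy in the arithmetic setting, combining an arithmetic regularity lemma, an arithmetic counting lemma, and a cleaning step. The tower-type lower bound on $\delta$ will come entirely from the regularity lemma.

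First I would invoke Green's arithmetic regularity lemma: for every $\eta > 0$, there is a subgroup $H \leq G$ with index $[G:H] \leq \tower{\poly(1/\eta)}$ such that for all but an $\eta$-fraction of cosets $C$ of $H$, one has $\max_{\chi \neq 0} |\widehat{1_{A\cap C} - \alpha_C 1_C}(\chi)| \leq \eta$, where $\alpha_C = |A\cap C|/|H|$ is the relative density of $A$ on $C$. This is proved by an energy-increment argument: whenever some coset fails Fourier uniformity, one refines the partition by intersecting $H$ with the kernel of a violating character, thereby increasing an $L^2$ energy. The number of refinements is bounded by $O(1/\eta^2)$, but each refinement can multiply the index by an arbitrarily large factor, producing the tower-type bound.

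Next I would prove an arithmetic counting lemma: whenever $C_1, C_2, C_3$ are three regular cosets with $C_1 + C_2 + C_3 = H$ and with relative densities $\alpha_i \geq \eps/4$, the number of triangles $(x,y,z) \in (A\cap C_1) \times (A\cap C_2) \times (A\cap C_3)$ with $x+y+z = 0$ equals $\alpha_1 \alpha_2 \alpha_3 |H|^2 \pm O(\eta |H|^2)$. This follows from a direct Fourier expansion of the triangle count; the trivial character yields the main term, while the remaining terms are bounded using the Fourier-uniformity estimate together with Parseval's identity.

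The final step is a cleaning argument. Set $\eta = \eps^3/C_0$ for a sufficiently large constant $C_0$, apply the regularity lemma to obtain $H$ with $k := [G:H] \leq \tower{\poly(1/\eps)}$, and delete from $A$ every element that lies in (i) an irregular coset, (ii) a coset $C$ with $\alpha_C < \eps/4$, or (iii) a coset appearing in some triple $(C_1, C_2, C_3)$ with $C_1 + C_2 + C_3 = H$ and $\min_j \alpha_{C_j} < \eps/4$. A careful count shows the total number of deletions is at most $\eps N$, so by hypothesis the reduced set is not triangle-free. Any surviving triangle must lie in a triple of regular, dense cosets, and the counting lemma then guarantees $\Omega(\eps^3) \cdot |H|^2$ triangles in that triple, yielding $\delta \geq 1/\tower{\poly(1/\eps)}$ after dividing by $N^2 = k^2 |H|^2$.

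I expect the main obstacle to be the parameter calibration across the three steps: $\eta$ must be $\poly(\eps)$ so that the regularity lemma's index bound remains $\tower{\poly(1/\eps)}$, yet small enough relative to $\eps^3$ that the error in the counting lemma is dominated by the main term. A secondary subtlety is verifying the deletion bound in step (iii)---each element of $A$ participates in many summing-to-$H$ triples (one for every choice of $C_2$, since $C_3$ is then determined), so one should first identify the set of sparse cosets globally and only then delete, rather than naively iterating over triples.
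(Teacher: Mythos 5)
The theorem you are proving is the one the paper only cites (it is Green's result); the paper itself proves a stronger bound for $\F_2^n$ by a completely different, Fox-style entropy-increment argument that deliberately avoids the regularity lemma. Your plan is essentially Green's original route (regularity lemma $+$ Fourier counting lemma $+$ cleaning), which is indeed the natural way to get the stated $\delta \geq 1/\tower{\poly(1/\eps)}$, and in the finite-field setting your three steps fit together correctly: the counting-lemma error is $O(\eta|H|^2)$ (in fact uniformity of just one of the three cosets suffices, by Cauchy--Schwarz and Parseval), the calibration $\eta = \eps^3/C_0$ is the right one, and your step (iii) of the cleaning is superfluous once you delete all sparse and all irregular cosets globally --- deleting only those already forces every surviving triangle into a dense regular triple, and your own closing remark essentially collapses (iii) into (ii), so that wrinkle is harmless.

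The genuine gap is that the statement is for \emph{all} finite Abelian groups, while your first step --- a bounded-index subgroup $H \preceq G$ on almost all of whose cosets $A$ is Fourier-uniform --- simply does not exist in general. In $G = \Z_p$ with $p$ prime there are no proper nontrivial subgroups at all, so any $H$ of index at most $\tower{\poly(1/\eta)}$ is $G$ itself (for large $p$), and the regularity conclusion would assert that every set is $\eta$-uniform on $G$, which is false. Subgroup-based regularity of the kind you invoke is a feature of bounded-exponent groups such as $\F_p^n$; for general $G$, Green's actual argument has to work with Bohr-set decompositions (approximate subgroups), where the atoms are not cosets and the counting and cleaning steps acquire substantial extra error terms, or alternatively one reduces to graph/directed-cycle removal as in Kr\'{a}l--Serra--Vena, which is the route the paper mentions for arbitrary groups. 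So as written your proof establishes the theorem for $\F_2^n$ (and $\F_p^n$), matching the setting of the paper's own main theorem but with the weaker tower-type bound inherent to the regularity approach; to prove the cited statement in its stated generality you must replace step 1 by a Bohr-set regularity lemma (and redo the counting lemma for Bohr atoms) or argue via the graph removal lemma.
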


Other than being useful for proving the above result, Green's
arithmetic analogue of \Szemeredi's regularity lemma has had many
other applications in combinatorics \cite{geelen2014critical, geelen2014odd} 
and computer science \cite{KhotOD09, Bhat15, Bhat12}. In addition to leading to interesting
analogues of combinatorial statements, the arithmetic setting has at
least one more advantage: the proofs of these statements often proceed
by partitioning the underlying spaces, and because of the structure
provided by subgroups and cosets, the arguments are often cleaner
(specially for vector spaces over finite fields). This makes the
arithmetic setting more attractive in the search for
quantitative improvements to the results.

In this paper, we present an arithmetic analogue of the proof by Fox,
for the group $\F_2^n$. The argument can also be extended to other
Abelian groups, but we restrict ourselves to $\F_2^n$ for simplicity.

\begin{theorem} 
\label{thm:main}
For all $\eps \in (0,1]$, there exists a $\delta \geq
1/\tower{O(\log((1/\eps))}$ such that for all $n \in \N$ and $N \defeq
2^n $, any subset $A \subseteq \F_2^n$ which is $\eps$-far from being
triangle-free, must contain at least $\delta N^2$ triangles.
\end{theorem}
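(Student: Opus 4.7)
The plan is to mimic Fox's iterative refinement in the arithmetic setting, with cosets of nested subspaces of $\F_2^n$ playing the role of Szemer\'edi partitions and bounds on the Fourier coefficients of restricted indicators playing the role of regularity. Suppose, for contradiction, that $A \subseteq \F_2^n$ is $\eps$-far from triangle-free but contains at most $\delta N^2$ triangles, for $\delta$ to be chosen.

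I would first express the triangle count in a way that respects a subspace partition. Fix a subspace $V \leq \F_2^n$ and, for each coset $c + V$, let $f_c\from V \to \{0,1\}$ be $f_c(u) = 1_A(c+u)$. Since the triangle relation $x+y+z=0$ forces the three cosets involved to be $c_1+V$, $c_2+V$, and $(c_1+c_2)+V$, the number of triangles of $A$ confined to such a triple equals $|V|^2 \sum_{\chi \in \hat V} \hat f_{c_1}(\chi)\hat f_{c_2}(\chi)\hat f_{c_1+c_2}(\chi)$, with main term $|V|^2 \alpha_{c_1}\alpha_{c_2}\alpha_{c_1+c_2}$, where $\alpha_c = \E_u f_c(u)$ is the relative density of $A$ on $c+V$. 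If all three restricted functions have nontrivial Fourier coefficients bounded by $\eta$ (so that the triple is $\eta$-\emph{pseudorandom}), then the tail is bounded by $\eta |V|^2\sqrt{\alpha_{c_2}\alpha_{c_1+c_2}}$ via Cauchy--Schwarz and Parseval, so the triangle count on such a triple is tightly controlled by its densities.

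The main argument would be an iterative construction of a decreasing chain of subspaces $\F_2^n = V_0 \supsetneq V_1 \supsetneq \cdots \supsetneq V_T$ together with nested subsets $A = A_0 \supseteq A_1 \supseteq \cdots \supseteq A_T$, with geometrically decreasing parameters $\eps_t = \eps/2^t$. At step $t$, for each triple of cosets of $V_t$ in $A_t$ whose combined density is at least $\eps_t$ but which fails to be $\eps_t$-pseudorandom, the failure exhibits a large Fourier coefficient of some $f_c$ on $V_t$; I pass to $V_{t+1}$ by intersecting $V_t$ with the kernels of all such offending characters, and then prune $A_t$ to $A_{t+1}$ by deleting points in cosets of $V_{t+1}$ whose relative density is below $\eps_{t+1}$. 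The Fox twist is the choice of $\eps_t$ geometrically: this caps total removal by $\sum_t \eps_t N \leq 2\eps N$, while allowing the relevant pseudorandomness parameter to be improved by only a constant factor at each step. A standard energy/potential increment on the density-weighted $\ell^2$ mass of the structured Fourier part then terminates the process in $T = O(\log(1/\eps))$ steps. Each step increases the codimension of $V_t$ by at most $\poly(1/\eps_t)$, yielding $\dim V_T^\perp \leq \tower{O(\log(1/\eps))}$.

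After termination, $|A \setminus A_T| \leq O(\eps N)$, so by $\eps$-farness (after adjusting constants) $A_T$ still contains at least one triangle, and every surviving triangle lies in a pseudorandom triple whose three densities are all at least $\eps_T \geq \eps/2^T$. The Fourier counting estimate then lower-bounds the number of triangles on each such triple by $\Omega(\eps_T^3) |V_T|^2$, producing a total of at least $\delta N^2$ triangles with $\delta = 1/\tower{O(\log(1/\eps))}$ and yielding the contradiction. The main obstacle I anticipate is the bookkeeping in the refinement step: simultaneously handling cosets of low relative density, triples with a non-pseudorandom coordinate, and the coupling that the same coset lies in many triples, while keeping the removal budget tight enough that the geometric schedule of $\eps_t$ survives all three loss sources. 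Getting this right is precisely what converts the $\poly(1/\eps)$ tower height of classical regularity into the $\log(1/\eps)$ height obtained by Fox.
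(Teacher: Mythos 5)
There is a genuine gap at the heart of your iteration: the claim that ``a standard energy/potential increment on the density-weighted $\ell^2$ mass'' terminates the process in $T=O(\log(1/\eps))$ steps. A standard $L^2$/energy increment gains only $\poly(\eps)$ per step against a potential bounded by $1$, so it yields $\poly(1/\eps)$ refinement rounds; and since each round must kill at least one offending character for each of the up to $2^{\mathrm{codim}(V_t)}$ bad cosets, the codimension roughly exponentiates per round rather than growing by $\poly(1/\eps_t)$ (your own statement that it grows by $\poly(1/\eps_t)$ per step would give total codimension $\poly(1/\eps)$, not a tower, so the bookkeeping is internally inconsistent). A $\poly(1/\eps)$-round iteration with one exponential per round reproduces Green's bound $1/\delta\leq\tower{\poly(1/\eps)}$, not the claimed $\tower{O(\log(1/\eps))}$. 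The geometric schedule $\eps_t=\eps/2^t$ does not repair this: it only controls the removal budget, it does not reduce the number of rounds, and it is not where Fox's improvement comes from.

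The missing ingredients are exactly the Fox-type ones the paper implements. First, pass from $A$ to the union $A'$ of a maximal family of element-disjoint triangles, so that $|A'|\geq\eps N$ and every round works at density scale $\Omega(\eps)$ (no decreasing thresholds); the disjointness is what guarantees that when the triangle count is below roughly $\eps^3N^2/T^2$, a constant fraction (at least $\eps N/6$) of the elements of $A'$ lie in cosets that get \emph{shattered}. Second, the per-round gain is not an $\ell^2$-energy gain from mere irregularity but an entropy gain from shattering: if few triangles pass through a coset triple, then, after decomposing $A'\cap(H+g_1)$ into superregular pieces supported on cosets of smaller subgroups (the analogue of Fox's superregular tuples, which your ``intersect all offending kernels and prune'' step does not provide), at least a $\nfrac{1}{20}$-fraction of the sub-cosets of one of the other two cosets must have density at most $\tfrac34$ of average; the defect form of Jensen's inequality then raises the mean entropy $\E_{g}\,f\bigl(|A'\cap(H+g)|/|H|\bigr)$, with $f(x)=x\log x$, by $\Omega(\eps)$. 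Since this potential is confined to $[\eps\log\eps,0]$, a range of only $O(\eps\log(1/\eps))$, the iteration stops after $O(\log(1/\eps))$ rounds, and only then does the one-exponential-per-round growth of the partition give $1/\delta\leq\tower{O(\log(1/\eps))}$. Without the disjoint-triangle reduction, the shattering lemma, and the entropy potential, your outline cannot beat the tower-of-$\poly(1/\eps)$ bound.
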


We remark that the above result (for all groups) already follows from
a version of the removal lemma for directed cycles, using a reduction
by Kr\'{a}l, Serra and Vena \cite{KralSV09}. This was already
observed by Fox \cite{Fox11} (see also \cite{ConlonF12}). However, we
believe some of the Fourier analytic notions that come up in a direct
arithmetic proof might be of independent interest. Also, a direct
proof makes it somewhat more transparent how the argument partitions
the underlying group, which might be useful for further
improvements. We present a sketch of the proof below.

\paragraph{Known lower bounds for triangle removal.}
The original proof by Ruzsa and \Szemeredi used \Szemeredi's
regularity lemma \cite{S75}. However, the known lower bounds for the
regularity lemma and its variants (see \cite{Gowers97} and
\cite{ConlonF12}) imply that this approach necessarily obtains a bound
on $1/\delta$ which is at least $\tower{\poly(1/\eps)}$. Fox's
argument \cite{Fox11} manages to avoid using the regularity lemma
directly (although his proof still follows the same outline as the
proof of the regularity lemma), thus obtaining $1/\delta \leq
\tower{O(\log(1/\eps))}$.

In terms of lower bounds on $1/\delta$, it was shown by Alon \cite{Alon02} that one must have
$1/\delta \geq 2^{\Omega(\log^2(1/\eps))}$ for triangle removal in graphs. For the problem of
triangle removal in groups, in the case when the group is $\Z/p\Z$, Green~\cite{Green05} gives a similar lower bound of $1/\delta \geq 2^{\Omega(\log^2(1/\eps))}$. However, in the case of $\F_2^n$, the only known lower bounds are polynomial in $\eps$. Bhattacharyya and Xie \cite{BhatX10} show that for triangle removal in $\F_2^n$ one must have $1/\delta \geq
(1/\eps)^{8.487}$, which was later improved to $(1/\eps)^{13.239}$ by \cite{FuK14}.
These lower bounds remain quite far from the known upper bounds on $1/\delta$.

\subsection{Proof Sketch}
We will give a proof by contradiction. Let $G$ denote the group
$\F_2^n,$ and assume, to the contrary, that we have a set $A \subseteq
G$ that is $\eps$-far from being triangle-free, and has at most $\delta
|G|^2$ triangles. We work with a subset $A' \subseteq A$ that is the
union of a maximal collection of element disjoint triangles,
\emph{i.e.} no two triples representing triangles share a common
element. Since $A\backslash A'$ is triangle-free, and $A$ is
$\eps$-far from being triangle-free, $|A'| \ge \eps|G|.$ Define
$\eps_0 \defeq \nfrac{|A'|}{|G|}.$

In the rest of the sketch, we denote $A'$ by $A,$ and $\eps_0$ by
$\eps.$ The proof is based on a potential-increment argument. At every step in
the proof, we have a partition of $G$ into $T$ cosets of a subgroup $H
\preceq G$ where $T \defeq \abs{\nfrac{G}{H}}$, and hence an
induced partition of $A$ according to cosets of $H.$ We measure the
\emph{mean entropy} of the partition, defined as
$\Ex{g}{\ent \left(\frac{|A \cap (H+g)|}{|H|}\right)},$ where $\ent(x) \defeq
x\log x$ for $x \in (0,1]$ and $\ent(0)\defeq 0.$ Observe that the mean
entropy is always non-positive, and by convexity it is at least $\eps
\log \eps.$ Our main lemma proves that if $\delta$ is much smaller
than $\eps^3\cdot \nfrac{|G|^2}{T^2}$, then we can \emph{shatter} $A,$
\emph{i.e.}, the current partition of $A$ can be refined according to
cosets of $H' \preceq H,$ such that the mean entropy increases by
$\Omega(\eps),$ and $\abs{\nfrac{G}{H'}} \le 2^{T\cdot
  O(\epsilon^{-3})}.$ In essence, the size of the partition, and hence
the bound on $\delta,$ are one exponential larger at every step. Since
the mean entropy is always non-positive, this process must stop after
$O(\log {\eps}^{-1})$ rounds, giving the required bound on $\delta.$

In order to show that we can shatter $A$ if it has too few triangles,
it is convenient to equate $A$ with its indicator function $A: G \to
\{0,1\}$, and the number of triangles with the sum $\sum_{x+y+z=0}
A(x)A(y)A(z).$ Suppose we want to count the number of triangles
between two cosets of $H,$ viz.  $H+g_1$ and $H+g_2,$ and a third
coset of $H',$ $H' + g_3 + z,$ where $g_1 + g_2 + g_3 = 0.$ Assume $A$
has density $\eps$ on all the three cosets. As a thought experiment,
if $A$ was the constant function $\eps$ on $H'+g_3+z,$ then the
``number of triangles'' between the three cosets is given by
$\eps|H||H'|\Ex{g \in H}{\frac{|A\cap (H'+g+g_1)|}{|H'|}\cdot
  \frac{|A\cap (H'+g+g_2+z)|}{|H'|}}.$ 
If this is significantly smaller
than $\eps^3|H||H'|$, then a Markov argument implies that partitioning according to cosets of $H'$
\emph{shatters} A \ie a constant fraction of the cosets have density significantly smaller than $\eps$.
A defect version of Jensen's
inequality then implies that the mean entropy of the new partition is
larger by $\Omega(\eps)$.

Of course, $A$ is not necessarily a constant function. However, a very
similar argument works if all the non-zero Fourier coefficients of the
function $A$ on $H'+g_3+z$ are much smaller than $\eps$ in absolute
value -- we call such functions \emph{superregular}, in analogy
with the proof from~\cite{Fox11}. The last part of the proof is to
find a \emph{superregular decomposition} of $A$ on $H+g_3,$
\emph{i.e.}  approximating $A \cap (H+g_3)$ by a sum of superregular
functions. If $A$ is not superregular on $H+g_3,$ we pick a large
Fourier coefficient $\eta \in \widehat{H},$ partition $x \in H$
according to the value of $\langle x,\eta\rangle,$ and restrict
ourselves to the part with greater density. If this part is not
superregular, we repeat this procedure. Since the density on any part
is at most 1, this process must end with a superregular part. We
remove this set from $A$ and repeat the procedure until most of $A$ is
covered, to find the required superregular decomposition.

This completes the proof sketch of Theorem~\ref{thm:main}.

\paragraph{Analogies and differences with the proof in~\cite{Fox11}.}
The proof of our main theorem on triangle-removal in groups is
analogous to the proof of triangle-removal in graphs from the work of
Fox~\cite{Fox11}. Nevertheless, we need to give the appropriate
arithmetic analogues of the definitions and proofs. Though the proof
in this paper is self-contained, for the readers who are familiar with
the work of Fox~\cite{Fox11}, we point out the analogies and the
differences between the two proofs.

At every step in our proof, as in~\cite{Fox11}, we have a partition of
the underlying set, the group $G$ in our case. However, our partition
is structured, and consists of all the cosets of one fixed subgroup,
compared to an arbitrary partition in~\cite{Fox11}. Our definitions of
the potential function and shattering are similar, except that we use
the densities of the cosets, instead of the edge densities between
pairs of subsets used in~\cite{Fox11}. Our notion of regularity is
quite different from that in~\cite{Fox11}, and is based on Fourier
coefficients, similar to the one used in regularity lemmas for Abelian
groups~\cite{Green05}. The superregular decomposition that we find for
a given set is analogous to the collection of superregular tuples in a
graph, constructed in~\cite{Fox11}.

\section{Preliminaries}
Fix a positive integer $n$. Throughout the paper, we denote $G \defeq
\F_2^n$ and $N \defeq |G| = 2^n.$ The notation $H \preceq G$ denotes
that $H$ is a subgroup of $G.$ 
We use $\widehat{H}$ to denote the dual
group of $H$. Denote by $H^\perp$ coset group of $H$ in $G$, which we also use to denote a set of
coset representatives. In some cases which will be clear from the context, by abuse of notation, 
we use the common definition of $H^\perp =\{ y ~\vert~ \sum_{i=1}^n x_i \cdot y_i  = 0 \;\; \forall x \in
H\}$. Note that for $G = \F_2^n$, 
\[
\hat{H} ~\cong~ G/H^{\perp} ~\cong~ H \mcom
\]
and hence $H^{\perp}$ can also be thought of as the coset group $G/H$.

Given a set $A \subseteq G,$ three elements $x,y,z \in A$ are said to
form a \emph{triangle} if $x+y+z=0.$ $A$ is said to be $\eps$-far from
being triangle-free, if at least $\eps N$ elements need to be removed
from $A$ in order that it contains no triangles.

We abuse notation and use $A$ to denote both the set $A$, and also its
characteristic function $A: G \to \B.$ Given a subgroup $H \preceq G,$
and an element $g\in G,$ we define $A_H^{g} : H\rightarrow \{0,1\}$ as
\[ A_H^g(x) \defeq A(x+g). \]

Let $\E_{x \in H} [\cdot]$ denote the expectation when $x$
is drawn uniformly from $H.$ For a function $f : H \to \R,$ we define
its Fourier coefficients as follows:
For $\eta\in \widehat{H},$ define
\[ \widehat{f}(\eta) \defeq \Ex{x \in H}{f(x) \chi_{\eta}(x)}.\] 

Jensen's inequality states that if $\ent$ is a convex function,
$\eps_1,\ldots,\eps_s$ are nonnegative real numbers such that
$\sum_{i=1}^s \eps_i = 1,$ then, for any real numbers
$x_1,\ldots,x_s,$
\begin{equation}
\label{eq:jensen}
\eps_1\ent(x_1)+ \cdots +\eps_s\ent(x_s) \ge \ent(\eps_1 x_1 + \cdots +\eps_s
x_s).
\end{equation}
Jensen's inequality immediately implies the following simple lemma,
which will be useful later.
\begin{lemma}[Lemma 6,~\cite{Fox11}]
\label{lem:jensen-simple}
Let $\ent: \R_{\geq 0} \rightarrow \R$ be a convex function,
$\epsilon_1,\ldots,\epsilon_s$ and $x_1,\ldots,x_s$ be nonnegative
real numbers with $\sum_{i\in [s]} \epsilon_i = 1$. For $I\subseteq
[s]$, let $c=\sum_{i\in I} \epsilon_i,$ $u=\sum_{i\in I}
\epsilon_ix_i/c$, and $v=\sum_{i\in [s]\backslash I}
\epsilon_ix_i/(1-c)$. Then we have
$$
\sum_{i\in [s]} \epsilon_i \ent(x_i)\geq c\ent(u)+ (1-c)\ent(v). 
$$
\end{lemma}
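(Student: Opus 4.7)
The plan is to derive this directly from Jensen's inequality (\ref{eq:jensen}) by splitting the weighted sum according to the partition $\{I, [s]\setminus I\}$ and applying Jensen separately on each part. The cleanness of the conclusion suggests there is no real obstacle beyond renormalizing each piece so that the weights become a probability distribution.

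Concretely, I would first write
\[
\sum_{i\in [s]} \epsilon_i f(x_i) \;=\; \sum_{i\in I} \epsilon_i f(x_i) \;+\; \sum_{i\in [s]\setminus I} \epsilon_i f(x_i).
\]
On the first sum I would pull out the factor $c = \sum_{i\in I}\epsilon_i$, so the weights $\epsilon_i/c$ (for $i \in I$) form a probability distribution. Applying (\ref{eq:jensen}) to the convex function $f$ with these weights and the values $x_i$ then gives
\[
\sum_{i\in I} \epsilon_i f(x_i) \;=\; c \sum_{i\in I} \tfrac{\epsilon_i}{c} f(x_i) \;\geq\; c\, f\!\left(\sum_{i\in I} \tfrac{\epsilon_i}{c} x_i\right) \;=\; c f(u).
\]
An identical argument on the complement, using the weights $\epsilon_i/(1-c)$ for $i \in [s]\setminus I$, yields $\sum_{i\in [s]\setminus I}\epsilon_i f(x_i) \geq (1-c) f(v)$. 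Adding the two inequalities gives the claim.

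The only subtle point is the degenerate case when $c = 0$ or $c = 1$, where one of the two renormalizations is ill-defined. In that case, however, one of the two terms $cf(u)$ or $(1-c)f(v)$ is a $0 \cdot f(\cdot)$ term (interpreted as $0$) and the other is just Jensen's inequality applied to the full sum, so the conclusion still holds. I would handle this by a one-line remark rather than a separate case analysis. No further estimates are needed; the lemma is essentially a bookkeeping statement saying that grouping points before averaging loses information in a controlled (convex) way.
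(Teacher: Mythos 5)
Your proof is correct and is exactly the argument the paper has in mind: it states that the lemma follows immediately from Jensen's inequality (Equation~\eqref{eq:jensen}) by splitting the sum over $I$ and $[s]\setminus I$ and renormalizing, which is precisely what you do, including the right handling of the degenerate cases $c=0$ or $c=1$.
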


\subsection{Entropy}
Our proof of the main theorem will be based on a potential-increment argument,
where our potential function will be the \emph{mean entropy of a partition}, as
defined below.

\begin{definition}[Entropy]
  Define the \emph{entropy function} $\ent : \R_{\geq 0} \to \R$ as $\ent(x)
  \defeq x\log x$, for $x \in \R_{>0}$, and $\ent(0)=0$.
\end{definition}
\begin{definition}[Mean Entropy of a Partition]
  Let $H^\prime \preceq H \preceq G,$ and $g \in G.$ Given a set $A
  \subseteq G,$ if we partition the coset $H+g$ as cosets of
  $H^\prime,$ define the mean entropy of this partition as follows
\[ \ent_A(H+g,H^\prime) \defeq \E_{g_1\in H+g}{\ent\left(\frac{|A\cap
    (H^\prime + g_1)|}{|H^\prime|}\right)} = \E_{g_1\in H}{\ent\left(\frac{|A\cap
    (H^\prime + g + g_1)|}{|H^\prime|}\right)}.
\]
Analogously, the mean entropy of partitioning $A$ on $G$ according to
cosets of $H\preceq G$ is defined as
\begin{equation*}
  \ent_A(G,H) \defeq \E_{g_1 \in G}{\ent \left(\frac{|A\cap
        (H + g_1)|}{|H|}\right)}.
\end{equation*}

\end{definition}

\begin{remark}\label{remark:cosetentropy}
  Assume $A \subseteq G$ has been partitioned according to cosets of a
  subgroup $H\preceq G$. For a refinement to this partition according
  to $H'\preceq H$ we have
\begin{equation*}
  \ent_A(G,H')= \E_{g\in G}{\ent \left(\frac{|A\cap
        (H^\prime + g)|}{|H^\prime|}\right)} = \E_{g
    \in G} \ent_A(H+g,H').
\end{equation*}
\end{remark}

In order to quantify the increase in the mean entropy because of a
shattering, we need the following defect version of Jensen's
inequality for the entropy function. 
We include a proof for completeness.
\begin{lemma}[Defect inequality for entropy. \cite{Fox11}, Lemma 7]
\label{lem:defectentropy}
Let $\epsilon_1,\ldots,\epsilon_s,$ and $x_1,\ldots,x_s,$ be
nonnegative real numbers with $\sum_{i\in [s]} \epsilon_i=1$, and
$a=\sum_{i\in [s]} \epsilon_ix_i$. Suppose $\beta< 1,$ and
$I\subseteq [s]$ is such that $x_i\leq \beta a$ for all $i\in I.$ Let
$c=\sum_{i\in I} \epsilon_i$. Then,
\[\sum_{i\in [s]} \epsilon_i \ent(x_i) \geq \ent(a) + (1-\beta + \ent(\beta))
ca.\]
\end{lemma}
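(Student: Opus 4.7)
The plan is to apply Lemma~\ref{lem:jensen-simple} with the given index set $I$ to collapse the weighted sum into two terms, and then carefully quantify how far the minimizing configuration sits above $f(a)$. Set $u \defeq \sum_{i\in I} \eps_i x_i/c$ and $v \defeq \sum_{i\notin I} \eps_i x_i/(1-c)$, so that $cu + (1-c)v = a$ and, by hypothesis, $u \leq \beta a$. Lemma~\ref{lem:jensen-simple} immediately gives
\[
\sum_{i\in [s]} \eps_i f(x_i) \;\geq\; c f(u) + (1-c) f(v).
\]

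The next step is to view $G(u) \defeq c f(u) + (1-c) f\!\left(\frac{a-cu}{1-c}\right)$ as a function of the single variable $u \in [0,a]$ (with $v$ eliminated via the constraint). Since $f'(x) = \log x + 1$, a direct differentiation gives $G'(u) = c(\log u - \log v)$, which is nonpositive exactly when $u \leq v$, i.e.\ whenever $u \leq a$. Thus $G$ is nonincreasing on $[0,a]$, and since $u \leq \beta a \leq a$, we obtain $G(u) \geq G(\beta a)$. Plugging in $w \defeq a(1-c\beta)/(1-c)$ and expanding $f(\beta a)$ and $f(w)$, the $a\log a$ contributions telescope and leave
\[
c f(\beta a) + (1-c) f(w) - f(a) \;=\; ca\, f(\beta) + a(1-c\beta)\log\frac{1-c\beta}{1-c}.
\]
So the whole lemma reduces to the one-line inequality
\[
(1-c\beta)\log\frac{1-c\beta}{1-c} \;\geq\; c(1-\beta).
\]

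I would close this out using $\log(1+y) \geq y/(1+y)$ applied with $y = c(1-\beta)/(1-c)$, so that $1+y = (1-c\beta)/(1-c)$; multiplying through by $1-c\beta \geq 0$ gives exactly the reduced inequality. I do not expect a serious obstacle here: the only non-bookkeeping step is the monotonicity argument, which is what lets us replace the unknown $u$ by its worst case $\beta a$ and thereby extract a clean defect term linear in $(1-\beta)ca$. The degenerate cases ($a=0$, $c=0$, or $c=1$ forcing $\beta=1$) all collapse either to $0\geq 0$ or to plain Jensen.
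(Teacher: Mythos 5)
Your proposal is correct and takes essentially the same approach as the paper: both reduce to the two-term bound $cf(u)+(1-c)f(v)$ via Lemma~\ref{lem:jensen-simple} with the same $u,v$, and your closing inequality $\log(1+y)\ge y/(1+y)$ is exactly the paper's use of $f(1+x)>x$ in disguise. The only difference is bookkeeping: you pass to the worst case $u=\beta a$ first (via the monotonicity of $G$) and then verify one algebraic inequality, whereas the paper bounds the $v$-term first and then replaces $u/a$ by $\beta$ using that $f(x)+1-x$ is decreasing on $[0,1]$; both handle the degenerate cases $a=0$, $c\in\{0,1\}$ the same way.
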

\begin{proof}
We know that $c<1$ since otherwise, $a=\sum_{1\leq i\leq s}
\epsilon_ix_i= \sum_{i\in I} \epsilon_ix_i\leq \beta a < a,$ a
contradiction. The cases when $a$ or $c$ are equal to $0$ follow
immediately from Jensen's inequality (Equation~\eqref{eq:jensen}),
therefore we may assume that $a,c\ne 0$.

Letting $u\defeq\frac{1}{c}\sum_{i\in I}\epsilon_i x_i$ and
$v\defeq\frac{1}{1-c} \sum_{i\not\in I}\epsilon_i x_i,$ we have,
\begin{align*}
\sum_{1\leq i \leq s} \epsilon_i \ent (x_i) 
&~\geq~ c \ent (u) + (1-c)\ent (v) \\
&~=~ \ent(a)+ ca\ent(u/a) + (1-c)a\ent(v/a)\\ 
&~\geq~ \ent(a)+ ca \ent(u/a) + ca(1-u/a) \\
&~=~ \ent(a)+ \big(\ent(u/a)+1-u/a\big)ca\\
&~\geq~ \ent(a)+ (1-\beta+\ent(\beta))ca,
\end{align*}
where the first inequality is from Lemma~\ref{lem:jensen-simple}, the first
equality follows from the definition of the entropy function $\ent$, the
second inequality follows from the fact that $\ent(\frac{v}{a})=
\ent(\frac{1-uc/a}{1-c})=\ent(1+\frac{(1-u/a)c}{1-c})>
\frac{(1-u/a)c}{1-c}$, and the last inequality follows from the fact
that $u/a \leq \beta$ and that $\ent(x)+1-x$ is a decreasing function on
the interval $[0,1]$.
\end{proof}

The following lemma shows how the mean entropy of a partition compares
to that of its refinement.

\begin{lemma}[Entropy - Basic inequalities]
\label{lem:entropy}
Let $H^\prime \preceq H \preceq G,$ and $A \subseteq G.$
\begin{enumerate}
\item $0 \ge \ent_A(G,H) \geq \ent_A(G,G)= \ent\left(\frac{|A|}{|G|}\right).$
\item For every $g \in G$, we have $\ent_A(H+g,H') \geq
  \ent_A(H+g,H)$, and therefore $\ent_A(G,H^\prime) \geq \ent_A(G,H).$
\end{enumerate}
\end{lemma}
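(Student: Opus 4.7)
The plan is to deduce both parts from two elementary facts about $f(x)=x\log x$: it is convex on $[0,1]$, and it is nonpositive on $[0,1]$ (since $\log x\leq 0$ there). Given these, the lemma is essentially a repeated application of Jensen's inequality, after identifying the correct average in each case.

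For part 1, I would first observe that $\ent_A(G,H) = \E_{g_1\in G}\, f(|A\cap(H+g_1)|/|H|)$ is an average of values of $f$ at points in $[0,1]$, hence is at most $0$. For the lower bound I would note the elementary identity
\[
\E_{g_1\in G}\, \frac{|A\cap(H+g_1)|}{|H|} \;=\; \frac{|A|}{|G|},
\]
which holds because summing $|A\cap(H+g_1)|$ over coset representatives counts each element of $A$ exactly once. Applying Jensen's inequality \eqref{eq:jensen} to the convex function $f$ with uniform weights then yields $\ent_A(G,H)\geq f(|A|/|G|)=\ent_A(G,G)$, which gives part 1.

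For part 2, I would argue at the level of a single coset $H+g$. Writing the cosets of $H'$ inside $H+g$ as $H'+g+g_1$ for $g_1$ ranging over coset representatives of $H'$ in $H$, the same counting argument gives
\[
\E_{g_1\in H}\, \frac{|A\cap(H'+g+g_1)|}{|H'|} \;=\; \frac{|A\cap(H+g)|}{|H|}.
\]
Jensen's inequality applied to the convex function $f$ with uniform weights over this finite average yields
\[
\ent_A(H+g,H') \;=\; \E_{g_1\in H}\, f\!\left(\tfrac{|A\cap(H'+g+g_1)|}{|H'|}\right) \;\geq\; f\!\left(\tfrac{|A\cap(H+g)|}{|H|}\right) \;=\; \ent_A(H+g,H),
\]
which is the first statement of part 2. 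Taking expectation over $g\in G$ and using the remark immediately preceding the lemma (which expresses $\ent_A(G,H')$ as $\E_{g\in G}\,\ent_A(H+g,H')$) then yields $\ent_A(G,H')\geq \ent_A(G,H)$.

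There is no real obstacle here: the only thing to be careful about is the averaging identity that identifies the mean of the subcoset densities with the parent density, which comes down to the fact that cosets of $H'$ partition $H+g$ into $|H|/|H'|$ equally sized pieces. Once that is in place, both parts follow from convexity and nonpositivity of $f$ in one line each.
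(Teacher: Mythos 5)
Your proof is correct and follows the same route as the paper, which simply cites convexity of $f$ and Jensen's inequality; you have merely filled in the averaging identities and the nonpositivity of $f$ on $[0,1]$, which is exactly what the paper's one-line proof leaves implicit.
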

\begin{proof}
Both parts follow from convexity of $\ent,$ and Jensen's inequality (Equation~\eqref{eq:jensen}).
\end{proof}

In Lemma~\ref{lem:shattering-entropy-increase}, we will show how
shattering a partition can substantially increase its mean entropy,
which will allow us to use the mean entropy as a potential function.

\section{Super-regularity}
In this section, we define a notion of regularity, and show how one
can approximate any set $A$ as a sum of regular parts.
\begin{definition}[$\rho$-Superregularity]
Let $H \preceq G,$ and $g \in G$. Given a function $f: H+g \to \R,$
say that $f$ is $\rho$-superregular on $H+g$ if, for every $\eta \in
\widehat{H},$ $\eta \neq 0,$ we have, $\abs{\widehat{f_H^{g}}(\eta)}
\le \rho\cdot \widehat{f_H^{g}}(0),$ where the function $f_H^{g} : H
\to \R$ is
defined as $f_H^{g}(x) \defeq f(x+g)$ for all $x \in H.$
\end{definition}

Similar notions of regularity have been used in the proof of a
Szemer\'{e}di type regularity lemma for $\F_2^n$~\cite{Green05}, and
are well-studied as notions of pseudorandomness for subsets of abelian
groups (\emph{e.g.} see \cite{CG92, Gowers98}).

Given a set $A,$ the following lemma identifies a subset of $A$ that
is superregular.
\begin{lemma}[Finding Superregular Parts]
\label{lem:one-regular-part}
  Let $H$ be a subgroup of $G.$ Given an element $g \in G,$ a desired
  regularity parameter $\rho \in (0,1],$ a density parameter $d > 0,$
  and a set $A \subseteq H + g$ such that $|A| \ge d|H|,$ we can find
  a triple $(A_1,H_1,z_1)$ such that:
\begin{enumerate}
\item $H_1$ is a subgroup of $H$ satisfying $\left| \nicefrac{H}{H_1} \right| \le
  2^{\log_{(1+\rho)} (1/d)}.$
\item $z_1 \in H \cap H_1^{\bot}.$
\item $A_1 \subseteq A,$ $A_1 \subseteq H_1 + z_1+g,$ and $|A_1| \ge d|H_1|.$
\item The indicator function of $A_1$ restricted to $H_1 + z_1 + g$ is
  $\rho$-superregular on $H_1+z_1+g.$
\end{enumerate}
\end{lemma}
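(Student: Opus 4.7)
I will construct $(A_1, H_1, z_1)$ by an iterative density-increment procedure that halts as soon as the current indicator function becomes $\rho$-superregular. Initialize $B_0 := A$, $H_0 := H$, and $z_0 := 0$, so that $B_0 \subseteq H_0 + z_0 + g$ and $|B_0|/|H_0| \ge d$. At step $i$, let $f_i : H_i \to \{0,1\}$ be given by $f_i(x) := \mathbf{1}_{B_i}(x + z_i + g)$, i.e.\ the indicator of $B_i$ restricted to the coset $H_i + z_i + g$ and shifted back to $H_i$. If $f_i$ is $\rho$-superregular, the procedure halts and I output $(A_1, H_1, z_1) := (B_i, H_i, z_i)$. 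Otherwise there exists a nonzero $\eta \in \widehat{H_i}$ with $|\widehat{f_i}(\eta)| > \rho \cdot \widehat{f_i}(0) = \rho \cdot |B_i|/|H_i|$, which I use to refine the triple.

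\textbf{Density-increment step.} Because $G = \F_2^n$, the character $\chi_\eta : H_i \to \{\pm 1\}$ has kernel $H_{i+1} := \ker \chi_\eta$ of index exactly $2$ in $H_i$. Picking $u \in H_i$ with $\chi_\eta(u) = -1$, so that $H_i = H_{i+1} \cup (H_{i+1} + u)$ as a disjoint union, a direct computation yields
\[
\tfrac{1}{2}\Ex{x \in H_{i+1}}{f_i(x)} \;-\; \tfrac{1}{2}\Ex{x \in H_{i+1} + u}{f_i(x)} \;=\; \widehat{f_i}(\eta).
\]
Hence the denser of the two cosets has average value exceeding $|B_i|/|H_i|$ by more than $\rho \cdot |B_i|/|H_i|$. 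I choose $z_{i+1} \in \{z_i,\, z_i + u\}$ so as to land on that denser coset (so $z_{i+1} \in H$), and set $B_{i+1} := B_i \cap (H_{i+1} + z_{i+1} + g)$. Then $B_{i+1} \subseteq B_i \subseteq A$, $B_{i+1} \subseteq H_{i+1} + z_{i+1} + g$, and $|B_{i+1}|/|H_{i+1}| \ge (1+\rho)\cdot |B_i|/|H_i|$.

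\textbf{Termination and verification.} By induction, $|B_i|/|H_i| \ge (1+\rho)^i d$ at each step, and since a density is at most $1$ the procedure must halt at some step $T \le \log_{(1+\rho)}(1/d)$. Each iteration halves $|H_i|$, so $|H/H_T| = 2^T \le 2^{\log_{(1+\rho)}(1/d)}$, which is conclusion~(1). Reducing $z_T$ modulo $H_T$ yields a representative in $H \cap H_T^\perp$, giving~(2). Conclusions~(3) and~(4) are immediate from the construction and from the halting condition, respectively. I do not anticipate a substantive obstacle: the only nontrivial piece is the density-increment identity in the displayed equation, which is especially clean here because every nontrivial character of $\F_2^n$ is $\pm 1$-valued, so its kernel automatically has index exactly $2$; the remainder of the argument is routine bookkeeping of coset representatives.
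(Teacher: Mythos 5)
Your proof is correct and follows essentially the same route as the paper's: an iterative density-increment argument that, whenever superregularity fails, passes to the index-$2$ kernel of a large nontrivial character, keeps the denser of the two cosets (gaining a factor $1+\rho$ in density), and terminates within $\log_{(1+\rho)}(1/d)$ steps because the density is bounded by $1$. The only cosmetic difference is that you shift by an explicit $u \in H_i$ with $\chi_\eta(u) = -1$ rather than by $\eta$ itself, and you spell out the identity $\widehat{f_i}(\eta) = \tfrac12\E_{x\in H_{i+1}}[f_i(x)] - \tfrac12\E_{x\in H_{i+1}+u}[f_i(x)]$, which the paper leaves implicit.
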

\begin{proof}
We give an iterative procedure to find $A_1,H_1$ and $z_1.$ Initialize
$A_1 \defeq A,$ $H_1 \defeq H$ and $z_1 = 0.$ Observe that $z_1 \in H
\cap H_1^{\bot},$ and $\abs{A_1} \ge d\abs{H_1}.$

If the indicator function of $A_1$ restricted to $H_1+g + z_1$ is
${\rho}$-superregular on $H_1+g+z_1$, we are done and we can return
$(A_1,H_1,z_1).$

Otherwise, we must have $\eta \in \widehat{H_1}\backslash\{0\}$ for which 
$|\widehat{{A_1}^{g+z_1}_{H_1}}(\eta)| \ge \rho
\cdot \widehat{{A_1}_{H_1}^{g+z_1}}(0).$ Define $H_2\defeq H_1 \cap
\{h \ |\ \langle h, \eta \rangle = 0 \}$, so that $\abs{\nfrac{H_1}{H_2}}
= 2.$

If $|A_1 \cap (H_2 + g + z_1)| \ge |A_1 \cap (H_2 + g + z_1 + \eta)|,$
let $z_2 \defeq z_1.$ Otherwise, let $z_2 \defeq z_1 + \eta.$ Note
that $z_2 \in H \cap H_2^{\bot}.$

Defining $A_2 \defeq A_1 \cap (H_2 + g+ z_2),$ we have 
\begin{align*}
|\widehat{{A_1}^{g+z_1}_{H_1}}(\eta)|&= \frac{|A_2|- |A_1 \cap (H_2 + g + z_2 + \eta)|}{|H_1|}
\\ &\geq \rho  \widehat{{A_1}_{H_1}^{g+z_1}}(0) = \rho \cdot \frac{|A_1|}{|H_1|},
\end{align*}
moreover $\frac{|A_2|+|A_1 \cap (H_2 + g + z_2 + \eta)||}{|H_1|}=\frac{|A_1|}{|H_1|}$. Consequently,
$$
2\cdot \frac{|A_2|}{|H_1|} \geq (1+\rho)\cdot \frac{|A_1|}{|H_1|}.
$$
Thus since $\abs{\nfrac{H_1}{H_2}}
= 2$, we have that
$\frac{|A_2|}{|H_2|} \ge (1+\rho)\cdot \frac{|A_1|}{|H_1|},$
and $|A_2| \ge d|H_2|$ in particular.

Now, we let $H_1 \defeq H_2,$ $A_1 \defeq A_2$ and $z_1 \defeq z_2,$
and repeat the whole procedure. At every step, the triple
$(A_1,H_1,z_1)$ satisfies properties 2 and 3. 

Since the density of $A_1,$ \emph{i.e.} $\frac{|A_1|}{|H_1|},$ is
increasing by a factor of $(1+\rho)$ at every step, and can be at most
1, in $\log_{(1+\rho)} (1/d)$ iterations we must find a triple such
that the restriction of $A_1$ to $H_1+z_1+g$ is $\rho$-superregular on
$H_1 + z_1 + g$. Thus, the final triple satisfies property~4. Finally,
property~1 also holds since at every step, $|\nfrac{H}{H_1}|$
increases by a factor of 2.
\end{proof}

Repeatedly applying the above lemma gives us the following corollary,
which allows us to decompose a set into superregular parts.
\begin{corollary}[Superregular Decomposition]
\label{cor:regularizing}
Let $H$ be a subgroup of $G.$ Given an element $g \in G,$ a desired
regularity parameter $\rho \in (0,1],$ a density parameter $d \in
(0,1],$ and a set $A \subseteq H + g,$ we can find a positive integer
$t,$ and a collection of $t$ triples $(A_i,H_i,z_i),$ $i =
1,\ldots,t,$ such that $\{A_i\}_{i=1}^t$ are disjoint subsets of $A$
satisfying $\left|A \backslash (A_1 \cup \cdots \cup A_t) \right| \le d
|H|,$ and for every $i=1,\ldots,t$:
\begin{enumerate}
\item \label{item:cor:several-regular-parts:1} $H_i$ is a subgroup of
  $H$ satisfying $\left| \nicefrac{H}{H_i} \right| \le
  2^{\log_{(1+\rho)} (1/d)}.$
\item $z_i \in H \cap H_i^{\bot}.$ 
\item $A_i \subseteq A,$ $A_i \subseteq H_i + z_i+g,$ and $\abs{A_i}
  \ge d\abs{H_i}$.
\item The indicator function of $A_i$ restricted
  to $H_i + z_i + g$ is $\rho$-superregular on $H_i + z_i +g$.
\end{enumerate}

\end{corollary}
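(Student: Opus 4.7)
The plan is to iterate Lemma~\ref{lem:one-regular-part} greedily: at each step I would extract one superregular piece from the remaining portion of $A$ and continue until the uncovered portion has fewer than $d|H|$ elements. Formally, I would initialize $A^{(1)} \defeq A$ and loop as follows. While $|A^{(i)}| \ge d|H|$, the set $A^{(i)}$ is a subset of $H+g$ of density at least $d$ in $H$, so Lemma~\ref{lem:one-regular-part} applies with parameters $\rho$ and $d$; let $(A_i, H_i, z_i)$ denote the triple it returns, and set $A^{(i+1)} \defeq A^{(i)} \setminus A_i$. On termination at index $t+1$, we have $|A^{(t+1)}| < d|H|$, and since $A^{(t+1)} = A\setminus(A_1\cup\cdots\cup A_t)$, this is exactly the uncovered-mass bound demanded by the corollary.

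The sets $\{A_i\}_{i=1}^t$ are pairwise disjoint by construction, because $A_i\subseteq A^{(i)} = A\setminus(A_1\cup\cdots\cup A_{i-1})$. Properties 1--4 of the corollary for each triple $(A_i,H_i,z_i)$ are inherited verbatim from the corresponding application of Lemma~\ref{lem:one-regular-part}; in particular, the required inclusion $A_i \subseteq A$ follows from $A_i\subseteq A^{(i)}\subseteq A$, and the bound on $|\nfrac{H}{H_i}|$, the condition $z_i\in H\cap H_i^\bot$, the density lower bound $|A_i|\ge d|H_i|$, and the $\rho$-superregularity of $A_i$ on $H_i+z_i+g$ are all conclusions of the single-step lemma.

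The only subtlety is to verify termination. Lemma~\ref{lem:one-regular-part} guarantees $|A_i|\ge d|H_i|$, and since $|H_i|\ge 1$ and $d>0$, this forces $|A_i|\ge d > 0$; as $|A_i|$ is a nonnegative integer, $|A_i|\ge 1$. Hence $|A^{(i+1)}|\le |A^{(i)}|-1$, so the loop ends after at most $|A|$ iterations, producing a finite collection of triples. I expect this termination observation to be the main (and essentially only) nontrivial point in the write-up; everything else is a direct consequence of Lemma~\ref{lem:one-regular-part} combined with the greedy removal framework.
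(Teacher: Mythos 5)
Your proposal is correct and follows essentially the same greedy argument as the paper: repeatedly apply Lemma~\ref{lem:one-regular-part} to the uncovered remainder $A\setminus(A_1\cup\cdots\cup A_{i-1})$ while it still has at least $d|H|$ elements, with properties 1--4 and disjointness inherited directly from the lemma. Your explicit termination check (each $A_i$ is nonempty, so the loop ends) is a small addition the paper leaves implicit, but it changes nothing substantive.
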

\begin{proof}
  We build the collection of triples one at a time. At the
  $i^\textrm{th}$ step, let $B \defeq A \backslash (A_1 \cup
    \cdots \cup A_{i-1}).$ If $|B| \le d|H|,$ we can return the
  triples found till step $(i-1).$

  So we can assume $|B| > d |H|,$ and apply
  Lemma~\ref{lem:one-regular-part} with subgroup $H,$ element $g,$
  regularity parameter $\rho,$ density parameter $d,$ and subset $B
  \subseteq H+g,$ to find $(A_i, H_i, z_i)$ satisfying the required
  properties 1-4. Moreover, since $A_i \subseteq A \backslash (A_1 \cup
    \ldots \cup A_{i-1}),$ $A_i$ is disjoint from $A_1,\ldots,A_{i-1}.$

We add the triple $(A_i, H_i, z_i)$ to the collection and iterate.
\end{proof}

\section{Shattering}
In this section, we prove that if a set $A$ contains few triangles
from three cosets, then we can partition at least one of the cosets
into parts with significantly varying densities. In order to state the 
concerned lemma formally, we need to define \emph{shattering}. The
following definition is similar to that used in Fox's proof of the
graph removal lemma~\cite{Fox11}.
\begin{definition}[Shattering]
  Let $H$ be a subgroup of $G.$ Given $g \in G$ and a set $A \subseteq
  G,$ define $d\defeq\frac{|A \cap (H+g)|}{|H|}$. Given a subgroup
  $H^\prime \preceq H,$ and parameters $\alpha,\beta \in (0,1],$ we
  say that $H^\prime$ ($\alpha,\beta,k$)-shatters $A$ on $H+g$ if:
\begin{enumerate}
\item $|\nicefrac{H}{H'}| \leq 2^k$, and 
\item $\Pr_{g'\in H} \left[ \left|A\cap (H^\prime + g +
      g^\prime)\right| \leq \beta d \abs{H^\prime} \right] \geq \alpha.$
\end{enumerate}
Namely, partitioning $H+g$ according to cosets of $H'$ results in
significant variation in the density of $A$.
\end{definition}

Once we find a shattering, the defect version of Jensen's inequality
allows us to prove that the mean entropy of the partition increases.
\begin{lemma}[Entropy Increment]
\label{lem:shattering-entropy-increase}
Let $H^\prime \preceq H \preceq G,$ and $A \subseteq G.$ Let $g\in
G$. If $H'$ ($\alpha,\beta,k$)-shatters $A$ on $H+g$ then
\begin{equation*}
  \ent_A(H+g,H^\prime) \geq \ent_A(H+g,H)+ (1-\beta+\ent(\beta))\alpha
  \cdot \frac{|A \cap (H+g)|}{|H|}.
\end{equation*}
\end{lemma}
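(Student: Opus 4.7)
The plan is to apply the defect Jensen inequality (Lemma~\ref{lem:defectentropy}) directly to the partition of $H+g$ into its $s \defeq |H|/|H'|$ cosets of $H'$. First, I would enumerate these cosets as $H'+g+g'_1,\ldots,H'+g+g'_s$, set $x_j \defeq |A \cap (H'+g+g'_j)|/|H'|$, and take uniform weights $\eps_j \defeq 1/s$. With these identifications, $\sum_j \eps_j f(x_j) = \ent_A(H+g,H')$ by definition, and $a \defeq \sum_j \eps_j x_j = |A \cap (H+g)|/|H|$, which I will denote by $d$.

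A second observation I would use is that $\ent_A(H+g,H) = f(d)$. Indeed, for every $g_1 \in H+g$ the coset $H+g_1$ coincides with $H+g$, so the averaging in the definition of $\ent_A(H+g,H)$ collapses to the single value $f(d)$. The shattering hypothesis then translates exactly into the defect condition of Lemma~\ref{lem:defectentropy}: the index set $I \defeq \{j : x_j \leq \beta d\}$ has total weight $c \defeq \sum_{j\in I} \eps_j = \Pr_{g' \in H}[|A \cap (H'+g+g')| \leq \beta d |H'|] \geq \alpha$.

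Applying Lemma~\ref{lem:defectentropy} with these parameters then gives
$$\ent_A(H+g,H') \;=\; \sum_j \eps_j f(x_j) \;\geq\; f(d) + (1-\beta+f(\beta))\,c\,d \;=\; \ent_A(H+g,H) + (1-\beta+f(\beta))\,c\,d,$$
so it only remains to replace $c$ by the (possibly smaller) bound $\alpha$. This is the only step that needs any real care, and is legitimate provided $1-\beta+f(\beta)\geq 0$ on $(0,1]$. That inequality is a short calculus check: differentiating $\beta \mapsto 1-\beta+\beta\ln\beta$ yields $\ln\beta \leq 0$ on the interval, so the function is nonincreasing, and since it vanishes at $\beta=1$ it is nonnegative throughout. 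Substituting $d = |A\cap(H+g)|/|H|$ then yields the stated bound. The main obstacles, such as they are, amount to recognizing that $\ent_A(H+g,H)$ collapses to $f(d)$ and verifying the sign of $1-\beta+f(\beta)$; beyond these the argument is a direct bookkeeping application of the defect entropy inequality.
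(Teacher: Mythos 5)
Your proof is correct and follows essentially the same route as the paper, whose proof of Lemma~\ref{lem:shattering-entropy-increase} is exactly this: identify the cosets of $H'$ in $H+g$ with the $x_i$'s in Lemma~\ref{lem:defectentropy} (uniform weights), note $\ent_A(H+g,H)=f(d)$ with $a=d$, and read the shattering condition as $c\geq\alpha$. Your extra check that $1-\beta+f(\beta)\geq 0$ (with $f(x)=x\log x$ in the natural logarithm, as the paper's defect-inequality proof implicitly requires) is the one detail the paper leaves implicit, and it is verified correctly.
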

\begin{proof}
Follows from the definitions of $\ent_A$ and shattering, and applying
Lemma~\ref{lem:defectentropy}. 
\end{proof}

Now, we can state the main lemma of this section.
\begin{lemma}[Shattering Lemma]
\label{lem:shattering}
Let $H\preceq G,$ and $g_1,g_2,g_3 \in G$ such that
$g_1+g_2+g_3=0$. We are given a set $A \subseteq G$ with densities
$d_1, d_2,$ and $d_3$ on $H+g_1$, $H+g_2,$ and $H+g_3$
respectively. Then, either $A$ contains at least $\frac{1}{8}d_1d_2d_3 |H|^2$
triangles, or else, there is a subgroup $H'\preceq H$ such that 
$H'$ $\left(\nfrac{1}{20}, \nfrac{3}{4}, \log_{1+\rho}
  \left(\nicefrac{2}{d_1}\right)\right)$-shatters $A$ on at least one
of $H+g_2$ or $H+g_3$, where $\rho=\frac{d_2d_3}{4}$.
\end{lemma}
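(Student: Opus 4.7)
My plan is to prove the lemma by counting triangles $(x,y,z)$ of $A$ with $x \in H+g_1$, $y \in H+g_2$, $z \in H+g_3$ via a Fourier expansion on $H$, and to establish the required dichotomy: either this count already reaches $\tfrac{1}{8} d_1 d_2 d_3 |H|^2$, or else some subgroup arising from a superregular decomposition of the first coset shatters one of the other two.

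First I would apply the Superregular Decomposition (Corollary~\ref{cor:regularizing}) to $A \cap (H+g_1) \subseteq H+g_1$ with regularity parameter $\rho \defeq d_2 d_3 /4$ and density parameter $d \defeq d_1/2$. This produces disjoint triples $(A_i, H_i, z_i)$ with $\sum_i |A_i| \geq \tfrac{d_1}{2}|H|$, each $A_i$ $\rho$-superregular on $H_i + z_i + g_1$, and each $H_i \preceq H$ satisfying $|H/H_i| \leq 2^{\log_{1+\rho}(2/d_1)}$, matching exactly the size bound required of $H'$ in the conclusion. The case split is now: either some $H_i$ $\bigl(\tfrac{1}{20}, \tfrac{3}{4}, \log_{1+\rho}(2/d_1)\bigr)$-shatters $A$ on $H+g_2$ or on $H+g_3$, in which case we return $H' \defeq H_i$; or no such shattering exists, in which case I must produce the $\tfrac{1}{8} d_1 d_2 d_3 |H|^2$ triangles.

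Assume we are in the second case and fix one part $i$. Partitioning $H$ into cosets $H_i + u$ for $u$ ranging over coset representatives of $H/H_i$, write $p_u \defeq |A \cap (H_i + g_2 + u)|/|H_i|$ and $q_u \defeq |A \cap (H_i + g_3 + z_i + u)|/|H_i|$. The key computation is a Fourier expansion on $H_i$ of the triangle count $T_i$ with $x \in A_i$, $y \in A \cap (H+g_2)$, $z \in A \cap (H+g_3)$: the $\eta=0$ term contributes $|A_i| |H_i| \sum_u p_u q_u$, while the $\eta \neq 0$ terms are absorbed using $\rho$-superregularity of $A_i$ together with Cauchy--Schwarz over $u$ and Parseval (converting the $\ell_2$ Fourier mass of $A$ on each $H_i$-coset back to $p_u$ or $q_u$, and using $\sum_u p_u = d_2 |H|/|H_i|$, $\sum_u q_u = d_3 |H|/|H_i|$). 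This yields
\[
T_i \;\geq\; |A_i| |H_i| \sum_u p_u q_u \;-\; \rho |A_i| \sqrt{d_2 d_3}\, |H| \;\geq\; |A_i| |H_i| \sum_u p_u q_u \;-\; \tfrac{1}{4} d_2 d_3 |A_i| |H|,
\]
where the last step uses $\rho = d_2 d_3/4$ and $\sqrt{d_2 d_3} \leq 1$.

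Finally, because $H_i$ shatters neither $H+g_2$ nor $H+g_3$, a union bound gives that for at least a $\tfrac{9}{10}$ fraction of $u$ we have $p_u > \tfrac{3}{4} d_2$ and $q_u > \tfrac{3}{4} d_3$ simultaneously, so $\sum_u p_u q_u \geq \tfrac{81}{160}\cdot\tfrac{|H|}{|H_i|} d_2 d_3 > \tfrac{1}{2}\cdot\tfrac{|H|}{|H_i|} d_2 d_3$. Plugging in yields $T_i \geq \tfrac{1}{4} |A_i| |H| d_2 d_3$. Summing over $i$ (the $A_i$ are disjoint, hence so are the counted triangles) produces at least $\tfrac{1}{4} d_2 d_3 |H| \cdot \tfrac{d_1}{2}|H| = \tfrac{1}{8} d_1 d_2 d_3 |H|^2$ triangles, as required. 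The main technical step---and the place where the choice $\rho = d_2 d_3/4$ is dictated---is the Fourier-plus-Parseval bound on the error term; everything else is bookkeeping.
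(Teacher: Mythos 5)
Your proposal is correct and follows essentially the same route as the paper: the same superregular decomposition of $A \cap (H+g_1)$ with $\rho = d_2d_3/4$ and $d = d_1/2$, the same dichotomy, a per-part count of $\tfrac14 d_2 d_3 |A_i||H|$ triangles obtained from the zero-frequency term plus a superregularity/Cauchy--Schwarz error bound, and the same $\tfrac{81}{160} > \tfrac12$ union-bound step before summing over the disjoint parts. The only difference is presentational: you inline (in coset-wise form) what the paper packages as its Triangle Counting, Shattering-with-a-Superregular-Part, and Fourier Shattering lemmas.
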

We first prove some necessary lemmas, and then give a proof of the
above lemma at the end of this section. The following lemma allows us
to count the number of triangles between three different sets.
\begin{lemma}[Triangle Counting]
\label{lem:triangle-counting}
Let $H^\prime \preceq H \preceq G.$ We are given $g_1,g_2,g_3 \in G$
such that $g_1 + g_2 + g_3 = 0,$ and $z_1 \in H.$ For any sets $A,B,C
\subseteq G,$ the number of triangles between $A \cap
(H^\prime+g_1+z_1), B \cap (H+g_2), $ and $C \cap (H+g_3)$ is given
by,
\[|H| |H^\prime| \sum_{\alpha \in \widehat{H^\prime}, \eta \in
    \widehat{H^\prime}^{\bot} \cap \widehat{H}}
  \widehat{A_{H'}^{g_1+z_1}}(\alpha) \widehat{B_H^{g_2}}(\alpha + \eta)
  \widehat{C_{H}^{g_3}}(\alpha+\eta) \chi_{\alpha + \eta}(z_1).\]
\end{lemma}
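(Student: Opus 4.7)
The plan is a direct Fourier-analytic computation. First I would rewrite the count of triangles as an explicit sum. The number of triangles in question is
\[
T \;=\; \sum_{x \in H'+g_1+z_1}\ \sum_{y \in H+g_2}\ \sum_{z \in H+g_3} A(x)\,B(y)\,C(z)\,\mathbbm{1}_{x+y+z=0}.
\]
Substituting $x = u+g_1+z_1$ with $u\in H'$, $y = v+g_2$ with $v\in H$, $z = w+g_3$ with $w\in H$, and using $g_1+g_2+g_3=0$ together with the fact that in $\F_2^n$ the constraint $u+v+w+z_1=0$ forces $w = u+v+z_1$ (which automatically lies in $H$ since $u\in H'\subseteq H$ and $z_1\in H$), I would collapse this to
\[
T \;=\; \sum_{u\in H'}\ \sum_{v\in H} A_{H'}^{g_1+z_1}(u)\,B_H^{g_2}(v)\,C_H^{g_3}(u+v+z_1).
\]

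Next I would apply Fourier inversion to each of the three functions in its appropriate group: $A_{H'}^{g_1+z_1}$ gets expanded over $\widehat{H'}$ with index $\alpha$, and $B_H^{g_2},\,C_H^{g_3}$ get expanded over $\widehat{H}$ with indices $\gamma$ and $\beta$. The character $\chi_\beta(u+v+z_1)$ factors as $\chi_\beta(u)\chi_\beta(v)\chi_\beta(z_1)$. The inner sum over $v\in H$ becomes $\sum_{v\in H}\chi_{\gamma+\beta}(v) = |H|\cdot\mathbbm{1}_{\gamma=\beta}$, collapsing one of the character variables. This leaves
\[
T \;=\; |H|\,\sum_{\alpha\in\widehat{H'}}\,\sum_{\beta\in\widehat{H}} \widehat{A_{H'}^{g_1+z_1}}(\alpha)\,\widehat{B_H^{g_2}}(\beta)\,\widehat{C_H^{g_3}}(\beta)\,\chi_\beta(z_1)\,\sum_{u\in H'}\chi_\alpha(u)\chi_\beta(u).
\]

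The final step is to understand the inner sum $\sum_{u\in H'}\chi_\alpha(u)\chi_\beta(u)$. The character $\chi_\beta$ restricts to a character of $H'$, and orthogonality on $H'$ forces this sum to vanish unless $\beta|_{H'} = \alpha$ (where we identify $\widehat{H'}$ with the restriction of $\widehat{H}$ to $H'$), in which case it equals $|H'|$. The set of $\beta\in\widehat{H}$ that restrict trivially to $H'$ is exactly the annihilator of $H'$ inside $\widehat{H}$, denoted ${H'}^{\bot}\cap\widehat{H}$ in the paper, a subgroup of $\widehat{H}$ of index $|H'|$. Consequently every $\beta\in\widehat{H}$ with $\beta|_{H'}=\alpha$ can be written uniquely as $\alpha+\eta$ with $\eta\in {H'}^\bot\cap\widehat{H}$ (after fixing some lift of $\alpha$ from $\widehat{H'}$ to $\widehat{H}$). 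Re-indexing via $\beta = \alpha+\eta$ yields
\[
T \;=\; |H||H'|\,\sum_{\alpha\in\widehat{H'}}\,\sum_{\eta\in {H'}^\bot\cap\widehat{H}} \widehat{A_{H'}^{g_1+z_1}}(\alpha)\,\widehat{B_H^{g_2}}(\alpha+\eta)\,\widehat{C_H^{g_3}}(\alpha+\eta)\,\chi_{\alpha+\eta}(z_1),
\]
which is the claimed formula.

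The only real subtlety is bookkeeping around the three different dual groups $\widehat{G}, \widehat{H}, \widehat{H'}$ and making the identification $\widehat{H}\cong \widehat{H'}\times({H'}^\bot\cap \widehat{H})$ precise enough to justify the final re-indexing; the rest is orthogonality of characters and a direct substitution. Everything else is routine once the initial change of variables eliminates $w$.
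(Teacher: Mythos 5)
Your proof is correct and follows essentially the same route as the paper: eliminate one variable via the constraint $x+y+z=0$, Fourier-expand the three indicator functions, apply character orthogonality over $H$ and then over $H'$, and re-index the surviving frequencies as $\alpha+\eta$ with $\eta$ in the annihilator ${H'}^{\bot}\cap\widehat{H}$. The lift/identification subtlety you flag is handled implicitly in the paper's computation as well, so there is no substantive difference.
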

\begin{proof}
We first observe that for any $x_1 \in H^\prime$, and $x_2,x_3 \in H,$
three elements $x_1 + g_1 + z_1\in A,\ x_2 + g_2 \in B,$ and $x_3 + g_3 \in C,$ form a triangle iff $x_1 + x_2 + x_3 = z_1,$ since, 
\[(x_1 + g_1 + z_1) + (x_2 + g_2) + (x_3 + g_3) = x_1 + x_2 + x_3 +
z_1.\] 
Thus, in order to count the triangles in the three cosets, we count
all such triples $x_1,x_2,x_3$. The number of triangles equals:
\begin{align*}
& \sum_{\substack{x_1 \in
H^\prime,x_2,x_3 \in H \\
x_1 + x_2 + x_3 = z_1}} A(x_1 + g_1 + z_1)
B(x_2 + g_2) C(x_3 + g_3) \\
  & = \sum_{\substack{x_1 \in H^\prime,x_2,x_3 \in H \\ x_1 + x_2 + x_3
= z_1}} A_{H'}^{g_1+z_1}(x_1) B_H^{g_2}(x_2) C_{H}^{g_3}(x_3) \\
  & = \sum_{\substack{x_1 \in H^\prime,x_2,x_3 \in H \\ x_1 + x_2 + x_3
= z_1}} \sum_{\alpha \in \widehat{H^\prime}, \beta,\gamma \in
\widehat{H}} \widehat{A_{H'}^{g_1+z_1}}(\alpha)
\widehat{B_H^{g_2}}(\beta) \widehat{C_{H}^{g_3}}(\gamma)
\chi_{\alpha}(x_1) \chi_{\beta}(x_2) \chi_{\gamma}(x_3)\\
  & = \sum_{x_1 \in H', x_2 \in H} \sum_{\alpha \in
\widehat{H^\prime}, \beta,\gamma \in \widehat{H}}
\widehat{A_{H'}^{g_1+z_1}}(\alpha) \widehat{B_H^{g_2}}(\beta)
\widehat{C_{H}^{g_3}}(\gamma) \chi_{\alpha}(x_1) \chi_{\beta}(x_2) \chi_{\gamma}(x_1+x_2+z_1)\\
  & = |H| |H^\prime| \sum_{\alpha \in \widehat{H^\prime}, \beta,\gamma
\in \widehat{H}} \widehat{A_{H'}^{g_1+z_1}}(\alpha)
\widehat{B_H^{g_2}}(\beta) \widehat{C_{H}^{g_3}}(\gamma)
\Ex{\substack{x_1 \in H' \\ x_2 \in H}} {\chi_{\alpha+\gamma}(x_1)
\chi_{\beta+\gamma}(x_2) \chi_{\gamma}(z_1)} \\
  & = |H| |H^\prime| \sum_{\alpha \in \widehat{H^\prime}, \beta \in
\widehat{H}} \widehat{A_{H'}^{g_1+z_1}}(\alpha)
\widehat{B_H^{g_2}}(\beta) \widehat{C_{H}^{g_3}}(\beta) \Ex{x_1 \in
H'} {\chi_{\alpha+\beta}(x_1) \chi_{\beta}(z_1)} \\
  & = |H| |H^\prime| \sum_{\alpha \in \widehat{H^\prime}, \beta \in
\widehat{H}, \alpha + \beta \in \widehat{H^\prime}^{\bot} }
\widehat{A_{H'}^{g_1+z_1}}(\alpha) \widehat{B_H^{g_2}}(\beta)
\widehat{C_{H}^{g_3}}(\beta) \chi_{\beta}(z_1) \\
  & = |H| |H^\prime| \sum_{\alpha \in \widehat{H^\prime}, \eta \in
\widehat{H^\prime}^\bot \cap \widehat{H}, }
\widehat{A_{H'}^{g_1+z_1}}(\alpha) \widehat{B_H^{g_2}}(\alpha+\eta)
\widehat{C_{H}^{g_3}}(\alpha + \eta) \chi_{\alpha + \eta}(z_1),
\end{align*}
where the last equality follows by viewing $\widehat{H'}$ as a subgroup of 
$\widehat{H}$.
\end{proof}

Next, we use the above lemma to show that if the number of triangles
between three sets is small, then we can shatter at least one of them.
\begin{lemma}[Shattering with a Superregular Part]
\label{lem:shattering-superregular}
Let $H^\prime \preceq H \preceq G.$ We have $g_1,g_2,g_3 \in G$ such
that $g_1 + g_2 + g_3 = 0,$ and $z_1 \in H.$ Suppose we are given three
sets $A,B,C \subseteq G,$ such that
\[\frac{|A \cap (H'+g_1+z_1)|}{|H'|} = d_1,\quad \frac{|B \cap
(H+g_2)|}{|H|} = d_2,\quad \frac{|C \cap (H+g_3)|}{|H|} = d_3. \]
Also assume that the indicator function of $A$ restricted to
$H'+g_1+z_1$ is $\frac{d_2d_3}{4}$-superregular on $H'+g_1+z_1.$

Then, either there are at least $\frac{1}{4} d_1 d_2 d_3 |H| |H'|$
triangles between $A \cap (H^\prime+g_1+z_1), B \cap (H+g_2), $ and $C
\cap (H+g_3)$, or else, $H^\prime$
$(\nfrac{1}{20},\nfrac{3}{4},\log_2 |\nicefrac{H}{H'}|)$-shatters
either $B$ on $H+g_2,$ or $C$ on $H+g_3.$
\end{lemma}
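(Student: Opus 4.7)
The proof will start from the Fourier expansion for the number of triangles between the three cosets given by Lemma~\ref{lem:triangle-counting}, whose main $(\alpha,\eta)=(0,0)$ term equals $|H||H'| \cdot d_1 d_2 d_3$. I split the remaining ``error'' into $E_1$ (the sub-sum over $\alpha\neq 0$, with $\eta$ arbitrary) and $E_2$ (the sub-sum over $\alpha=0$, $\eta\neq 0$), and control each via the hypothesis that is tailored to it: superregularity of $A$ controls $E_1$, while the coset-density distributions of $B$ and $C$ control $E_2$.

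\textbf{Bounding $E_1$.} For $\alpha\neq 0$ the $(d_2 d_3/4)$-superregularity of $A$ on $H'+g_1+z_1$ gives $|\widehat{A_{H'}^{g_1+z_1}}(\alpha)| \leq (d_2 d_3/4)\, d_1$. Pulling this factor out and using the bijection $(\alpha,\eta) \mapsto \alpha+\eta$ from $\widehat{H'}\times (H'^\perp\cap\widehat{H})$ onto $\widehat{H}$, Cauchy--Schwarz combined with Parseval ($\sum_\beta |\widehat{B_H^{g_2}}(\beta)|^2 = d_2$, similarly for $C$) will yield $|E_1| \leq (d_1 d_2 d_3/4)\sqrt{d_2 d_3} \leq d_1 d_2 d_3/4$.

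\textbf{Rewriting $E_2$ as a correlation, and the final contradiction.} Define coset-density functions $p_B(g):=|B\cap(H'+g_2+g)|/|H'|$ and $p_C(g):=|C\cap(H'+g_3+g)|/|H'|$ on $H/H'$. Since $\chi_\eta$ is constant on $H'$-cosets iff $\eta\in H'^\perp$, a short Fourier computation on the quotient $H/H'$ identifies $\widehat{B_H^{g_2}}(\eta)$ (resp.\ $\widehat{C_H^{g_3}}(\eta)$) for $\eta \in H'^\perp \cap \widehat{H}$ with the corresponding Fourier coefficient of $p_B$ (resp.\ $p_C$) on $H/H'$; Parseval on $H/H'$ then gives
\[ \sum_{\eta\in H'^\perp\cap\widehat{H}} \widehat{B_H^{g_2}}(\eta)\,\widehat{C_H^{g_3}}(\eta)\,\chi_\eta(z_1) \;=\; \E_{g\in H/H'} p_B(g)\, p_C(g+z_1), \]
so $E_2 = d_1\bigl(\E_g p_B(g)\, p_C(g+z_1) - d_2 d_3\bigr)$. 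If $E_2 \geq -d_1 d_2 d_3/2$, combining with $|E_1|\leq d_1 d_2 d_3/4$ produces at least $\nfrac{1}{4}\, d_1 d_2 d_3 |H||H'|$ triangles, and we are in the first case of the lemma. Otherwise $\E_g p_B(g) p_C(g+z_1) < d_2 d_3/2$, and I argue by contradiction that $H'$ must $(\nfrac{1}{20},\nfrac{3}{4},\log_2|H/H'|)$-shatter $B$ on $H+g_2$ or $C$ on $H+g_3$: if neither is shattered then, using that $g\mapsto g+z_1$ is measure preserving on $H/H'$, a union bound produces a set of measure $\geq 9/10$ on which simultaneously $p_B(g) > \nfrac{3}{4}\, d_2$ and $p_C(g+z_1) > \nfrac{3}{4}\, d_3$, so $\E_g p_B(g) p_C(g+z_1) \geq (9/10)(9/16)\, d_2 d_3 = (81/160)\, d_2 d_3 > d_2 d_3/2$, contradicting the assumption.

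\textbf{Main obstacle.} The delicate step is the Fourier-theoretic identification of $E_2$ with the coset-density correlation $\E_g p_B(g) p_C(g+z_1)$; once that is in place the finish is a routine union bound. It is worth noting that the numerical slack is tight: $(1-2\alpha)\beta^2 = (18/20)(9/16) = 81/160$ barely exceeds $1/2$, which explains the specific shattering parameters $\alpha=\nfrac{1}{20}$ and $\beta=\nfrac{3}{4}$ chosen in the statement.
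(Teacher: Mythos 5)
Your proposal is correct and follows essentially the same route as the paper: the same split of the triangle-count expansion from Lemma~\ref{lem:triangle-counting}, the same superregularity-plus-Cauchy--Schwarz--Parseval bound $\frac{d_1d_2d_3}{4}\sqrt{d_2d_3}\le \frac{d_1d_2d_3}{4}$ on the $\alpha\neq 0$ part, and your identification of the $\alpha=0$ part with $\E_{g}\,p_B(g)p_C(g+z_1)$ together with the union-bound contradiction (with the same $\frac{81}{160}>\frac12$ slack) is exactly the content of the paper's Lemma~\ref{lem:fouriershattering}, which the paper states and proves separately rather than inlining.
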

\begin{proof}
We first use Lemma~\ref{lem:triangle-counting} to count the triangles
between $A \cap (H^\prime+g_1+z_1), B \cap (H+g_2), $ and $C \cap
(H+g_3).$
\begin{align*}\widehat{H^\prime}^
  \textrm{No. of triangles } & = |H| |H^\prime| \sum_{\alpha \in
\widehat{H^\prime}, \eta \in \widehat{H^\prime}^{\bot} \cap \widehat{H}}
\widehat{A_{H'}^{g_1+z_1}}(\alpha) \widehat{B_H^{g_2}}(\alpha +
\eta) \widehat{C_H^{g_3}}(\alpha + \eta) \chi_{\alpha +
\eta}(z_1). \\
  & = |H||H^\prime| \left(\sum_{\eta \in \widehat{H^\prime}^{\bot} \cap
\widehat{H}} \widehat{A_{H'}^{g_1+z_1}}(0) \widehat{B_H^{g_2}}(
\eta) \widehat{C_H^{g_3}}(\eta) \chi_{\eta}(z_1) \right.\\
& \qquad \qquad + \left.  \sum_{\alpha \in \widehat{H^\prime}, \eta
\in \widehat{H^\prime}^{\bot} \cap \widehat{H}, \alpha \neq 0}
\widehat{A_{H'}^{g_1+z_1}}(\alpha) \widehat{B_H^{g_2}}(\alpha +
\eta) \widehat{C_H^{g_3}}(\alpha + \eta) \chi_{\alpha + \eta}(z_1)
\right) \\
  & \ge |H||H^\prime| \left(d_1\sum_{\eta \in \widehat{H^\prime}^{\bot} \cap
\widehat{H}} \widehat{B_H^{g_2}}(\eta) \widehat{C_H^{g_3}}( \eta)
\chi_{\eta}(z_1) \right.\\
& \qquad \qquad - \frac{d_1 d_2 d_3}{4} \left.  \sum_{\alpha \in
\widehat{H^\prime}, \eta \in \widehat{H^\prime}^{\bot} \cap \widehat{H},
\alpha \neq 0} \widehat{B_H^{g_2}}(\alpha + \eta)
\widehat{C_H^{g_3}}(\alpha + \eta) \chi_{\alpha + \eta}(z_1) \right),
\end{align*}
where the last inequality uses the fact that the indicator function of
$A$ restricted to $H'+g_1+z_1$ is $\frac{d_2 d_3}{4}$-superregular on
$H^\prime + g_1 +z_1,$ and that $\widehat{A_{H'}^{g_1+z_1}}(0) =
d_1.$

Using the Cauchy-Schwarz inequality, we get that the second term in the bracket is
at least $-\frac{1}{4} d_1 d_2 d_3 \sqrt{d_2 d_3} \ge -\frac{1}{4} d_1 d_2 d_3$. Thus, if we have fewer than
$\frac{1}{4}d_1 d_2 d_3 |H||H^\prime|$ triangles, we must have that
the first term in the bracket is at most $\frac{1}{2}d_1 d_2 d_3.$
This implies that,
\[\sum_{\eta \in \widehat{H^\prime}^{\bot} \cap \widehat{H}}
\widehat{B_H^{g_2}}(\eta) \widehat{C_H^{g_3}}( \eta)
\chi_{\eta}(z_1) \le \frac{d_2 d_3}{2}.\]

We prove the following lemma, which allows us to deduce that $H'$
shatters either $B$ on $H+g_2,$ or $C$ on $H+g_3.$ A proof has been
included later in the section.
\begin{lemma}[Fourier shattering]
\label{lem:fouriershattering}
Let $H^\prime \preceq H \preceq G.$ Given two functions $f$ and $g$
from $H$ to $\R_{\ge 0}$ that satisfy
\begin{equation}\label{eq:fouriershattering}
\sum_{\eta \in \widehat{H^\prime}^{\bot} \cap \widehat{H}} \widehat{f}(\eta)
\widehat{g}( \eta) \chi_{\eta}(z_1) \le \frac{ d_1 d_2}{2},
\end{equation}
for some
positive $d_1, d_2$ and $z_1 \in H ;$ define the function $\bar{f}(v)
\defeq \Ex{y \in H^\prime} {f(v+y)}$ and $\bar{g}(v) \defeq \Ex{y \in
  H^\prime} {g(v+y)},$ where $v \in H.$ Then,
either $\Prob{v}{\bar{f}(v) < \frac{3}{4}d_1} > \frac{1}{20},$ or
$\Prob{v}{\bar{g}(v) < \frac{3}{4}d_2} > \frac{1}{20}.$
\end{lemma}
Assuming this lemma, and applying it to the functions $B_H^{g_2}$ and
$C_H^{g_3},$ we deduce that for the functions $\bar{B_H^{g_2}}(v)
\defeq \Ex{y \in H^\prime} {B(v+y+g_2)} = \frac{|B \cap
  (H'+g_2+v)|}{|H'|}$ and $\bar{C_H^{g_3}}(v) \defeq \Ex{y \in
  H^\prime} {C(v+y+g_3)} = \frac{|C \cap (H'+g_3+v)|}{|H'|},$ for $v \in H$, 

we have either
$\Prob{v}{\bar{B_H^{g_2}}(v) < \frac{3}{4}d_2} > \frac{1}{20},$ or
$\Prob{v}{\bar{C_H^{g_3}}(v) < \frac{3}{4}d_3} > \frac{1}{20}.$ This
means that $H^\prime$
$(\nfrac{1}{20},\nfrac{3}{4},\log |\nfrac{H}{H'}|)$-shatters either
$B$ on $H+g_2$ or $C$ on $H+g_3.$
\end{proof}

We are now ready to give a proof of the main lemma.
\begin{proof}(\emph{of Lemma~\ref{lem:shattering}}).
Let $\rho \defeq \frac{d_2d_3}{4}$. Apply Corollary~\ref{cor:regularizing}
with subgroup $H,$ element $g_1,$ regularity parameter $\rho,$ density
parameter $\nfrac{d_1}{2},$ and the set $A \cap (H+g_1),$ to partition
$A \cap (H+g_1)$ into $\rho$-superregular parts. Let
$\{(A_i,H_i,z_i)\}_{i=1}^t$ be the triples returned by
Corollary~\ref{cor:regularizing} satisfying $\left| \nicefrac{H}{H_i}
\right| \le 2^{\log_{(1+\rho)} (2/d_1)},$ $A_i \subseteq H_i + z_i+g_1,$
$\abs{A_i} \ge \nfrac{d_1}{2}\abs{H_i},$ and that the indicator
function of $A_i$ restricted to $H_i + z_i + g_1$ is
$\rho$-superregular on $H_i + z_i + g_1$.

Fix a triple $(A_i,H_i,z_i).$ If $H_i$ $(\nfrac{1}{20}$,
$\nfrac{3}{4}$, $\log_2 |\nicefrac{H}{H_i}|)$-shatters $A$ on $H+g_2$
or $H+g_3,$ it also $(\nfrac{1}{20}, \nfrac{3}{4} , \log_{1+\rho}
\nfrac{2}{d_1})$-shatters it and we are done. Otherwise, applying
Lemma~\ref{lem:shattering-superregular} to the sets $A_i, A \cap
(H+g_2),$ and $A \cap (H+g_3),$ on the cosets $H_i+z_i+g_1,H+g_2,$ and
$H + g_3,$ respectively, there must be at least
$\frac{1}{4}\frac{|A_i|}{|H_i|}d_2d_3|H||H_i| =
\frac{1}{4}d_2d_3|A_i||H|$ triangles between $A_i, A \cap (H+g_2),$
and $A \cap (H+g_3).$

Repeating the above argument for every triple, assume we do not find a
subset $H'$ that $(\nfrac{1}{20}, \nfrac{3}{4}, \log_{1+\rho}
\nicefrac{2}{d_1})$-shatters $A$ on at least one of $H+g_2$ or
$H+g_3.$ Then, since the sets $\{A_i\}_i$ are disjoint and for all
$i,$ $A_i \subseteq A$ , the total number of triangles between $A \cap
(H+g_1), A \cap (H+g_2),$ and $A \cap (H+g_3)$ is at least
\begin{equation*}
  \sum_{i=1}^t
  \frac{1}{4}d_2d_3|A_i||H| = 
  \frac{1}{4}d_2d_3|H| \sum_{i=1}^t|A_i| \ge   \frac{1}{8}d_1d_2d_3 |H|^2,
\end{equation*}
where the last inequality follows because $\{A_i\}_{i=1}^t$ are
disjoint subsets satisfying $|(A \cap (H+g_1)) \backslash (A_1 \cup
\cdots \cup A_t)| \le \frac{d_1}{2}|H|,$ and hence $\sum_i |A_i| \ge
|A \cap (H+g_1)| - \frac{d_1}{2}|H| \ge \frac{d_1}{2}|H|.$ This
completes the proof.
\end{proof}

We now give a proof of Lemma~\ref{lem:fouriershattering}.
\begin{proof}(\emph{of Lemma~\ref{lem:fouriershattering}}).
We can simplify the left side of (\ref{eq:fouriershattering}) as follows:
\begin{align*}
\sum_{\eta \in {H^\prime}^{\bot} \cap \widehat{H}}
\widehat{f}(\eta) \widehat{g}( \eta) \chi_{\eta}(z_1) & =
\frac{1}{|H|^2}\sum_{x_1,x_2 \in H}{\sum_{\eta \in {H^\prime}^{\bot}
\cap \widehat{H}} f(x_1) \chi_{\eta}(x_1) g(x_2)\chi_{\eta}(x_2)
\chi_{\eta}(z_1)} \\
& = \frac{1}{|H||H^\prime|}\sum_{\substack{x_1,x_2 \in H \\ x_1 + x_2
+ z_1 \in {H^\prime}}} {f(x_1) g(x_2) }\\ & \hfill \textrm{(Since $x_1+x_2+z_1 \in H$ and $|H'||H'^{\bot}\cap \widehat{H}|  = |H|$)} \\
& = \frac{1}{|H||H^\prime|}\sum_{x_1 \in H, y_1 \in H^\prime}
{f(x_1) g(x_1 + y_1 + z_1) } \\
& = \frac{1}{|H||H^\prime|}\sum_{y_1,y_2 \in H^\prime, v \in H \cap
{H^\prime}^\bot } {f(v+y_2) g(v+y_2 + y_1 + z_1) } \\
& = \frac{|H^\prime|}{|H|}\sum_{v \in H \cap {H^\prime}^\bot } \Ex{y_2
\in H^\prime}{f(v+y_2) \left( \Ex{y _1 \in H^\prime}{g(v+y_2 + y_1 +
z_1)} \right) } \\
& = \frac{|H^\prime|}{|H|}\sum_{v \in H \cap {H^\prime}^\bot } \Ex{y_2
\in H^\prime}{f(v+y_2) \left( \Ex{y _1 \in H^\prime}{g(v + y_1 + z_1)} \right) } \\
& \hfill \textrm{(Since for $y_1$ uniform over $H',$ $y_1+y_2$ is also
  uniform over $H'$)} \\
& = \frac{|H ^\prime|}{|H|}\sum_{v \in H \cap
  {H^\prime}^\bot }\Ex{y_2 \in H^\prime}
{f(y_2+v)} \Ex{y_1 \in H^\prime} {g(y_1 + v + z_1) } \\
& = \Ex{v \in H \cap
  {H^\prime}^\bot }{\bar{f}(v) \bar{g}(v+z_1)}.
\end{align*}

Thus,
\[\Ex{v \in H \cap
  {H^\prime}^\bot }{\bar{f}(v) \bar{g}(v+z_1)}\le \frac{ d_1 d_2}{2}.\]
We now claim that either $\Prob{v}{ \bar{f}(v) < \frac{3}{4} d_1} >
\frac{1}{20},$ or $\Prob{v}{ \bar{g}(v) < \frac{3}{4} d_2} >
\frac{1}{20}.$ This holds because otherwise, since
$\bar{f}(v),\bar{g}(v) \ge 0,$
\[\Ex{v \in H \cap {H^\prime}^\bot } {\bar{f}(v) \bar{g}(v+z_1)} \ge
\left(1 - 2\cdot\frac{1}{20}\right) \cdot \frac{3}{4} d_1 \cdot
\frac{3}{4} d_2 = \frac{81}{160} d_1 d_2 > \frac{1}{2} d_1 d_2,\]
which is a contradiction.
\end{proof}

\section{Proof of the main theorem}
The proof of Theorem~\ref{thm:main} will follow by repeated
applications of the following lemma. 
\begin{lemma}[Main Lemma]
\label{lem:main}
Let $A\subseteq G$ be a set that is a union of $\epsilon N$ disjoint
triangles. Suppose we have partitioned $G$ into cosets of a subgroup
$H,$ and let $T \defeq |\nicefrac{G}{H}|$. If $A$ contains less than
$\frac{\epsilon^3}{64T^2}N^2$ triangles (not necessarily disjoint),
then there is a subgroup $H^\prime \preceq H$ such that:
\begin{enumerate}
\item $\ent_A(G,H^\prime) \geq \ent_A(G,H) + \frac{\epsilon}{3600}.$
\item $|\nicefrac{G}{H'}| \leq c^T$, where
  $c=2^{2\log_{1+\epsilon^2/16} (\nicefrac{4}{\epsilon})}$.
\end{enumerate}
\end{lemma}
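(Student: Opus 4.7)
The plan is to apply the Shattering Lemma (Lemma~\ref{lem:shattering}) to many coset triples at once and collect the resulting shatterings, using element-disjointness of the triangles to account for how much mass gets shattered. Let $\{H+g_1,\ldots,H+g_T\}$ be the cosets of $H$ in $G$, and set $d_j \defeq |A\cap(H+g_j)|/|H|$.

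Define the \emph{heavy} cosets $S \defeq \{j : d_j \ge \epsilon/2\}$. The total mass of $A$ lying in light cosets is $\sum_{j\notin S} d_j\cdot|H| < (\epsilon/2)\cdot T\cdot|H| = \epsilon N/2$, so at most $\epsilon N/2$ of the $\epsilon N$ disjoint triangles touch a light coset, leaving at least $\epsilon N/2$ \emph{good} disjoint triangles whose three elements all lie in heavy cosets. For each good coset triple $(g_i,g_j,g_k)$ with $g_i+g_j+g_k=0$ and $i,j,k\in S$, I apply Lemma~\ref{lem:shattering}. Because all three densities are at least $\epsilon/2$, the case-(a) conclusion would give $\ge \frac{1}{8}(\epsilon/2)^3|H|^2 = \epsilon^3|H|^2/64$ triangles among these three cosets alone, whereas the hypothesis caps the \emph{total} triangle count in $A$ by the same quantity $\epsilon^3N^2/(64T^2) = \epsilon^3|H|^2/64$ with a strict inequality. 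Hence no good triple is case (a), and each one produces a subgroup $H'_{ijk}\preceq H$ that $(1/20,\,3/4,\,\log_{1+\epsilon^2/16}(4/\epsilon))$-shatters $A$ on one of $H+g_j$ or $H+g_k$.

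Next I assign each good disjoint triangle to the shattered coset $\ell \in \{j,k\}$ produced by its coset triple's application of the lemma; this $\ell$ contains exactly one element of the triangle, and since the triangles are pairwise element-disjoint, the number of triangles assigned to any coset $\ell$ is at most $|A\cap(H+g_\ell)| = d_\ell|H|$. Summing over shattered cosets,
\[
\sum_{\ell \text{ shattered}} d_\ell\cdot|H| \;\ge\; \#\{\text{good triangles}\} \;\ge\; \epsilon N/2,
\qquad\text{hence}\qquad \sum_{\ell \text{ shattered}} d_\ell \;\ge\; \epsilon T/2.
\]
Taking $H' \defeq \bigcap_\ell H'_\ell$ over one subgroup per shattered coset gives $|H/H'| \le 2^{T\log_{1+\epsilon^2/16}(4/\epsilon)}$, so $|G/H'| = T\cdot|H/H'| \le c^T$ using $T \le 2^{T\log_{1+\epsilon^2/16}(4/\epsilon)}$ for $\epsilon \in (0,1]$ and $T\ge 1$. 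By Lemma~\ref{lem:entropy}, refining from $H'_\ell$ to $H' \preceq H'_\ell$ only increases $\mathcal{F}_A(H+g_\ell,\cdot)$, so Lemma~\ref{lem:shattering-entropy-increase} gives on each shattered coset an increment of at least $(1-3/4+f(3/4))\cdot(1/20)\cdot d_\ell$. Averaging with weight $1/T$ against $\sum d_\ell \ge \epsilon T/2$ yields $\mathcal{F}_A(G,H')-\mathcal{F}_A(G,H) \ge \epsilon/3600$.

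The main obstacle will be the mass-accounting: I need the shattered coset $\ell$ produced by Lemma~\ref{lem:shattering} to be one of the triangle's three cosets (so that the element-disjointness bound $d_\ell|H|$ applies), and to track that the same coset can be shattered by multiple triples without over-counting triangles assigned to it. A secondary nuisance is pinning down the specific constant $1/3600$ through the chain of applications and confirming that $1 - 3/4 + f(3/4) > 0$ for the convex function $f(x) = x\log x$ under the natural-logarithm convention.
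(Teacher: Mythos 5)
Your proposal is correct and follows essentially the same route as the paper's proof: restrict to cosets of density at least $\epsilon/2$, apply Lemma~\ref{lem:shattering} to the coset triple of each surviving disjoint triangle (ruling out the triangle-count alternative via $\frac{1}{8}(\epsilon/2)^3|H|^2 = \epsilon^3 N^2/(64T^2)$), intersect one shattering subgroup per shattered coset, use element-disjointness to lower-bound the mass of $A$ in shattered cosets, and combine Lemma~\ref{lem:shattering-entropy-increase} with refinement monotonicity (Lemma~\ref{lem:entropy}) to get the entropy gain and the index bound $c^T$. Your bookkeeping is slightly different from the paper's (you get mass at least $\epsilon N/2$ in shattered cosets, hence an increment of $\epsilon/1200$, versus the paper's more conservative $\epsilon N/6$ and $\epsilon/3600$), which only strengthens the stated conclusion; the one nitpick is that a shattered coset may contain more than one element of a triangle, so say ``at least one element'' rather than ``exactly one,'' which is all the disjointness count needs.
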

\begin{proof}
First, remove those elements from $A$ which belong to cosets of $H$ in which 
 $A$ has density less than $\nfrac{\epsilon}{2}$. Let $A'$ be
what is left from $A$. Notice that the number of elements removed in
this process is at most $\epsilon N/2$, and hence $A'$ contains at
least $\epsilon N/2$ disjoint triangles.

Let $g_1,g_2,g_3$, be a triangle in $A',$ \emph{i.e.},
$g_1+g_2+g_3=0$, and for $i =1,2,3,$ let $d_i$ be the density of $A'$
in the coset $H+g_i$. Note that $d_1,d_2,d_3\geq
\nicefrac{\epsilon}{2}.$ Since $A'$ contains at most
$\frac{\epsilon^3N^2}{64T^2}\leq \frac{d_1d_2d_3}{8} |H|^2$ triangles,
by Lemma~\ref{lem:shattering}, there is a subgroup $H_1 \preceq H$
such that $H_1$ ($\nfrac{1}{20},\nicefrac{3}{4},\log_{1+{\eps^2}/{16}}
\nicefrac{4}{\epsilon}$)-shatters $A'$ on at least one of $H+g_1$ or
$H+g_2.$

For each coset $H+g$ that can be shattered, identify any one subgroup
that ($\nfrac{1}{20},\nicefrac{3}{4},\log_{1+{\eps^2}/{16}}
\nicefrac{4}{\epsilon}$)-shatters $A'$ on $H+g$. Let $H'$ be the
intersection of all these subgroups.  For a coset $H+g$ that is not
shattered, we have $\ent_A(H+g,H') - \ent_A(H+g,H) \geq 0$
(Lemma~\ref{lem:entropy}, part 2). For every coset $H+g$ such that
$A'$ is shattered on $H+g$, observe that $A \cap (H+g) = A' \cap
(H+g),$ and hence Lemma~\ref{lem:shattering-entropy-increase} implies,
\begin{align*}
  \ent_A(H+g,H') - \ent_A(H+g,H) 
& =   \ent_{A'}(H+g,H') - \ent_{A'}(H+g,H) \\ 
& \geq
  \frac{1}{20}\left(1-\frac{3}{4}+\ent\left(\frac{3}{4}\right)\right) \cdot
  \frac{|A' \cap (H+g)|}{|H|} \\
& \geq
  \frac{1}{600}\frac{|A' \cap (H+g)|}{|H|}.
\end{align*}

Since $A'$ contains at least $\epsilon N/2$ disjoint triangles, and at
least one element from each of these triangles is contained in a 
coset that is shattered, at least $\epsilon N/6$ of the elements of 
$A'$ belong to a coset that has been shattered. Thus, averaging over 
all the cosets of $H$ (using Remark~\ref{remark:cosetentropy}),
\begin{equation*}
  \ent_A(G,H') - \ent_A(G,H) \geq \frac{1}{600}\cdot \frac{\epsilon}{6} =
  \frac{\epsilon}{3600}.
\end{equation*}

Note that
$
|\nicefrac{H}{H'}| \leq 2^{T\log_{1+\epsilon^2/16}
  (\nfrac{4}{\epsilon})},
$
and hence,
$$
|\nicefrac{G}{H'}| \leq T2^{T\log_{1+\epsilon^2/16}
  (\nfrac{4}{\epsilon})} \leq 2^{2T\log_{1+\epsilon^2/16}
  (\nfrac{4}{\epsilon})} =c^T.
$$
\end{proof}

Now we show how to deduce the main theorem using the above lemma.
\begin{proof}(\emph{of Theorem~\ref{thm:main}}).
Let $\delta$ be such that $\delta^{-1}$ is a tower of twos of height
$\Theta(\log \nfrac{1}{\eps})$ (the constant in $\Theta$ will be specified
later). Assume for contradiction that $A$ is $\eps$-far from being
triangle-free, and has less than $\delta N^2$ triangles. Thus, more
than $\epsilon N$ elements have to be removed from $A$ to make it
triangle-free. Consider a maximal set of disjoint triangles in $A,$ of
say $\eps_0 N$ triangles, and let $A'\subseteq A$ be the union of
these triangles. Thus, $|A'| = 3\eps_0 N.$ From now on, we will only
work with $A'$. Since $A \backslash A'$ must be triangle-free,
$3\eps_0 N \ge \eps N,$ \emph{i.e.}, $\eps_0 \ge \nfrac{\eps}{3}.$

Since $A$ has at most $\delta N$ triangles, $A'$ also has at most
$\delta N$ triangles. Now, we repeatedly apply Lemma~\ref{lem:main} in
order to find successively finer partitions of $G$ with increasing
mean entropies. For $i=0,$ start with the trivial partition according
to cosets of $H_0 \defeq G$ for which,
\begin{enumerate}
\item the number of parts is $T_0 \defeq|\nicefrac{G}{G}|=1$, and,
\item the mean entropy density of the partition is $\ent_{A'}(G,H_0) =
  3\epsilon_0 \log 3\epsilon_0$.
\end{enumerate}

At step $i,$ if $\delta N^2\leq \frac{\epsilon_0^3}{64 T_i^2} N^2,$
then we can apply Lemma~\ref{lem:main} to refine the partition
according to a subgroup $H_{i+1}\preceq H_{i}$, such that
$\ent_{A'}(G,H_{i+1}) \ge \ent_{A'}(G,H_i) + \frac{\eps_0}{3600}.$
Moreover $T_{i+1} \leq 2^{2 T_i \log_{1+\epsilon_0^2/16}
  (\nicefrac{4}{\epsilon_0})}.$ Since $\delta^{-1}$ is a tower of twos
of height $\Theta(\log \nfrac{1}{\epsilon}) = \Omega(\log
\nfrac{1}{\epsilon_0})$, we can pick the constant inside the $\Theta$
large enough so that the condition $\delta \le \frac{\epsilon_0^3}{64
  T_i^2}$ is satisfied for all $i \le t \defeq \left\lceil{12000 \log
    \nfrac{1}{3\eps_0}}\right\rceil.$ This implies $\ent_{A'}(G,H_t)
\ge 3\eps_0 \log 3\eps_0 + \frac{\eps_0}{3600}\cdot
 12000 \log \frac{1}{3\eps_0} > 0.$ However, we must always have
$\ent_{A'}(G,H_t) \leq 0$ (Lemma~\ref{lem:entropy}, part 1), and hence
this is a contradiction.
\end{proof}

\subsection*{Acknowledgements}
The authors would like to thank Oded Regev and anonymous
  reviewers for their valuable comments.

\bibliographystyle{amsalpha}
\bibliography{groups}


\end{document}